\newcommand{\End}{\operatorname{End}}
\newcommand{\Endg}{\End_\g}
\newcommand{\Hom}{\operatorname{Hom}}
\newcommand{\Mod}{\operatorname{Mod}}
\newcommand{\gMod}{\g\text{-$\operatorname{Mod}$}}
\newcommand{\Id}{\operatorname{Id}}
\newcommand{\str}{\operatorname{str}}
\newcommand{\pt}{\operatorname{ptr}}
\newcommand{\sdim}{\operatorname{sdim}}
\newcommand{\qd}{\operatorname{\mathsf{d}_h}}
\newcommand{\e}{\operatorname{e}}
\newcommand{\ZZ}{\mathbb{Z}}
\newcommand{\CC}{\mathbb{C}}
\newcommand{\NN}{\mathbb{N}}
\newcommand{\sll}{\mathfrak{sl}}
\newcommand{\slmn}{\sll(m|n)}
\newcommand{\osp}{\mathfrak{osp}(2|2n)}
\newcommand{\qum}[1]{\widetilde{#1}}
\newcommand{\h}{\ensuremath{\mathfrak{h}}}
\newcommand{\g}{\ensuremath{\mathfrak{g}}}
\newcommand{\borel}{\ensuremath{\mathfrak{b}}}
\newcommand{\nil}{\ensuremath{\mathfrak{n}}}
\newcommand{\p}[1]{\ensuremath{\overline {#1}}}
\newcommand{\roots}{\Delta}
\newcommand{\weyl}{W}
\newcommand{\kzt}{A_{\g}}
\newcommand{\DJg}{\ensuremath{U_{h}^{DJ}(\g) } }
\newcommand{\IT}{\mathcal{IT}}
\newcommand{\ev}{\operatorname{ev}}
\newcommand{\coev}{\operatorname{coev}}
\newcommand{\Vect}{\operatorname{Vect}}
\newcommand{\SV}{\operatorname{SV}}
\newcommand{\ideal}{\mathcal{I}}
\newcommand{\gm}{\gMod}
\newcommand{\modf}{\text{-$\Mod$}_{fr}}
\newcommand{\wta}{\lambda}
\newcommand{\cd}{\operatorname{\mathsf{d}}}
\newcommand{\op}[1]{{#1}^-}
\newcounter{bibcount}
\newtheorem{prop}{\bf Proposition}[section] 
\newtheorem{defi}[prop]{\bf Definition}
\newtheorem{lem}[prop]{\bf Lemma} 
\newtheorem{theo}{\bf Theorem}
\newcommand{\mathsmall}[1]{\mbox{\small$#1$}}
\newcommand{\pic}[2]{
  \setlength{\unitlength}{#1}
  {\begin{array}{c} \hspace{-1.3mm}
        \raisebox{-4pt}{#2}
        \hspace{-1.9mm}\end{array}}}
\newcommand{\ptrbfa}[3]{\pic{0.7ex}{
    \begin{picture}(10,24)(0,-2)
      \qbezier(5, 3)(5, 0)(7, 0)
      \qbezier(7, 0)(9, 0)(9, 4)
      \put(2,3){\vector(0,-1){4}}
      \put(2,17){\line(0,1){4}}
      \put(9,4){\vector(0,1){12}}
      \multiput(0,3)(7,0){2}{\line(0,1){3}}
      \multiput(0,3)(0,3){2}{\line(1,0){7}}
      \multiput(0,7)(7,0){2}{\line(0,1){5}}
      \multiput(0,7)(0,5){2}{\line(1,0){7}}
      \multiput(0,13)(7,0){2}{\line(0,1){4}}
      \multiput(0,13)(0,4){2}{\line(1,0){7}}
      \put(3.5,6){\line(0,1){1}}
      \put(3.5,12){\line(0,1){1}}
      \qbezier(5, 17)(5, 20)(7, 20)
      \qbezier(7, 20)(9, 20)(9, 16)
      \put(2.5,3.6){#3}
      \put(2.5,8.7){#2}
      \put(2.5,13.7){#1}
    \end{picture}} }
\newcommand{\drawTr}{
          \qbezier(3, 3)(3, 0)(6, 0)
          \qbezier(6, 0)(9, 0)(9, 4)
          \put(9,4){\vector(0,1)3}
          \multiput(0,3)(6,0){2}{\line(0,1){5}}
          \multiput(0,3)(0,5){2}{\line(1,0){6}}
          \qbezier(3, 8)(3, 11)(6, 11)
          \qbezier(6, 11)(9, 11)(9, 7)
        }
\begin{document}
\let\co=\comment \let\endco=\endcomment
\title[A supertrace for Lie superalgebras]{An invariant supertrace for the
  category of representations of Lie superalgebras} \author{Nathan Geer}
\address{School of Mathematics\\
  Georgia Institute of Technology\\
  Atlanta, GA 30332-0160, USA} \email{geer@math.gatech.edu} \author{Bertrand
  Patureau-Mirand}
\address{LMAM, Universit\'e de Bretagne-Sud, BP 573\\
  F-56017 Vannes, France } \email{bertrand.patureau@univ-ubs.fr} \date{\today}

\begin{abstract}
  In this paper we give a re-normalization of the supertrace on the category
  of representations of Lie superalgebras of type~I, by a kind of modified
  superdimension.  The genuine superdimensions and supertraces are generically
  zero.  However, these modified superdimensions are non-zero and lead to a
  kind of supertrace which is non-trivial and invariant.  As an application we
  show that this new supertrace gives rise to a non-zero bilinear form on a
  space of invariant tensors of a Lie superalgebra of type~I.  The results of
  this paper are completely classical results in the theory of Lie
  superalgebras but surprisingly we can not prove them without using quantum
  algebra and low-dimensional topology.
\end{abstract}

\maketitle
\setcounter{tocdepth}{1}

\section*{Introduction}

The theory of quantum groups and classical representation theory of Lie
algebras has been widely and productively used in low-dimensional topology.
There are fewer examples of low-dimensional topology or quantum groups being
used to produce results in the classical theory of Lie algebras.  Good
examples of such work include the theory of crystal bases (see \cite{Kash})
and the use of the Kontsevich integral to give a new proof of the
multiplicativity of the Duflo-Kirillov map $S(\g)\rightarrow U(\g)$ for
metrized Lie (super-)algebras $\g$ (see \cite{BLT}).  In this paper we use
low-dimensional topology and quantum groups to define a non-trivial kind of
supertrace on the category of representations of a Lie superalgebra of type~I.
It should be noted that the genuine supertrace is generically zero on such a
category (see Proposition \ref{P:tr0}).

In \cite{GP2,GPT}, the authors give a re-normalization of the
Reshetikhin-Turaev quantum invariants, by modified quantum dimensions.  In the
case of simple Lie algebras these modified quantum dimensions are proportional
to the genuine quantum dimensions.  For Lie superalgebras of type~I the
genuine quantum dimensions are generically zero but the modified quantum
dimensions are non-zero and lead to non-trivial link invariants.  In this case
the modified quantum dimension of a quantized module is given by an explicit
formula which is determined by the underlying Lie superalgebra module.  In
this paper we take the classical limit of the modified quantum dimension to
obtain a modified superdimension.  Then we use this modified superdimension to
re-normalize the supertrace and define a non-trivial bilinear form on a space
of invariant tensor.

Our proof that the modified supertrace is well defined and has the desired
properties is as follows.  We first formulate the desired statements at the
level of the Lie superalgebra.  Then we ``deform'' these statements to the
quantum level and use low-dimensional topology to prove these ``deformed''
statements.  Taking the classical limit we recover the original statements.
To make this proof precise we use the Etingof-Kazhdan theory of quantization.

\subsection*{Acknowledgments} 
The work of N. Geer was partially supported by NSF grant no. DMS-0706725.  He
also thanks LMAM, Universit\'e de Bretagne-Sud for the invitations during
which this work was done.

\section{Preliminaries}\label{s:prel}

In this section we review background material that will be used in the
following sections.

A \emph{super-space} is a $\ZZ_{2}$-graded vector space $V=V_{\p 0}\oplus
V_{\p 1}$ over $\CC$. 
We denote the parity of an homogeneous element $x\in V$
by $\p x\in \ZZ_{2}$.  We say $x$ is even (odd) if $x\in V_{\p 0}$ (resp.
$x\in V_{\p 1}$).  In the Appendix we recall some basic features and
conventions concerning the category of super-spaces.

A \emph{Lie superalgebra} is a super-space $\g=\g_{\p 0} \oplus \g_{\p 1}$
with a super-bracket $[\: , ] :\g^{\otimes 2} \rightarrow \g$ that preserves
the $\ZZ_{2}$-grading, is super-antisymmetric ($[x,y]=-(-1)^{\p x \p
  y}[y,x]$), and satisfies the super-Jacobi identity (see \cite{K}).  
 Throughout, all modules will be
$\ZZ_{2}$-graded modules (module structures which preserve the
$\ZZ_{2}$-grading, see \cite{K}).

\subsection{Lie superalgebras of type I}\label{SS:Liesuper}
In this subsection we recall notations and properties related to Lie
superalgebras of type I.

Let $\g=\g_{\p 0}\oplus \g_{\p 1}$ be a Lie superalgebra of type I, i.e.
$\g$ is equal to $\slmn$ or $\osp$.  We will assume that $m\neq n$.  Let
$\borel$ be the distinguished Borel sub-superalgebra of $\g$.  Then $\borel$
can be written as the direct sum of a Cartan sub-superalgebra $\h$ and a
positive nilpotent sub-superalgebra $\nil_{+}$.  Moreover, $\g$ admits a
decomposition $\g=\nil_{-}\oplus \h \oplus \nil_{+}$.  Let $\weyl$ be the Weyl
group of the even part $\g_{\p0}$ of $\g$.

Let $\roots_{\p 0}^{+}$ (resp. $\roots_{\p 1}^{+}$) be the even (resp. odd)
positive roots.  Let $\rho_{\p0}$ (resp. $\rho_{\p1}$) denote the half sum of
all the even (resp. odd) positive roots.  Set $\rho= \rho_{\p0}-\rho_{\p1}$.
A positive root is called \emph{simple} if it cannot be decomposed into a sum
of two positive roots.

A Cartan matrix associated to a Lie superalgebra is a pair consisting of a $r
\times r$ matrix $A=(a_{ij})$ and a set $\tau\subset \{1,\ldots,r\}$
determining the parity of the generators.  Let $(A,\tau)$ be the Cartan matrix
arising from $\g$ and the distinguished Borel sub-superalgebra $\borel$.  Here
the set $\tau=\{s\}$ consists of only one element because of our choice of
Borel sub-algebra $\borel$. (See the appendix.)

By Proposition 1.5 of \cite{Kac78} there exists $e_{i}\in \nil_{+}$, $f_{i}\in
\nil_{-}$ and $h_{i}\in\h$ for $i=1,\ldots,r$ such that the Lie superalgebra
$\g$ is generated by $e_{i}, f_{i}, h_{i}$ where
\begin{align*} [e_{i},f_{j}]=& \delta_{ij}h_{i}, & [h_{i},h_{j}]=&0, &
  [h_{i},e_{j}]=&a_{ij}e_{j}, & [h_{i},f_{j}]=&-a_{ij}f_{j}.
\end{align*}
Note that these generators also satisfy the Serre relations and higher order
Serre type relations (see \cite{Yam94}).

There are $d_{1},\ldots,d_{r}$ in $\{\pm1,\pm2\}$ such that the matrix
$(d_{i}a_{ij})$ is symmetric.  Let $<.,.>$ be the symmetric non-degenerate
form on $\h$ determined by $<h_{i},h_{j}>=d_{j}^{-1}a_{ij}$.  This form gives
an identification of $\h$ and $\h^{*}$.  Moreover, the form $<.,.>$ induces a
$\weyl$-invariant bilinear form on $\h^{*}$, which we will also denote by
$<.,.>$.

\subsection{The category $\gm$} Modules over Lie superalgebras of type I are
different in nature than modules over semi-simple Lie algebras.  For example,
each Lie superalgebra of type I has one parameter families of modules.  Any
module in such a family has superdimension zero and so the supertrace of an
endomorphism of such a module is zero.

Let $\gm$ be the category of finite dimensional $\g$-modules (see Appendix).   
We will now describe this category in more detail.  If $U$ and $V$ are two 
$\g$-modules we denote by $\Hom_\g(U,V)$ the super-space of $\g $-module 
morphisms.  The super-space $\Hom_\g(U,V)$ should not be confused
with $\Hom_\CC(U,V)$ (where $U$ and $V$ are viewed as
super-spaces) which is naturally equipped with a $\g$-module structure.  

Let $\wta\in\h^{*}$ be a linear functional on $\h$.  Kac \cite{K} defined a
$\g$ irreducible highest weight module $V(\wta)$ of weight $\wta$ with a
highest weight vector $v_{0}$ having the property that $h.v_{0}=\wta(h)v_{0}$
for all $h\in\h$ and $\nil_{+}v_{0}=0$.  Let $a_{i}=\wta(h_{i})$.  In \cite{K}
Kac showed that $V(\wta)$ is finite-dimensional if and only if $a_{i}\in \NN$
for $i\neq s$.  Therefore, $a_{s}$ can be an arbitrary complex number.
Irreducible finite-dimensional $\g$-modules are divided into two classes:
typical and atypical. 

There are many equivalent definition for a weight module to be typical (see
\cite{Kac78}).  Here we say that $V(\wta)$ is typical if it splits in any
finite-dimensional $\g$-module (i.e. if it is a submodule or a factor-module
of a finite dimensional $\g$-module then it is a direct summand).  By
Theorem~1 of \cite{Kac78} this is equivalent to requiring that
\begin{equation}
  \label{E:typ}
  <\wta+\rho,\alpha>\neq 0
\end{equation}
for all $\alpha \in \roots_{\p 1}^{+}$.
If $V(\wta)$ is (a)typical we will say the weight $\wta$ is (a)typical.

In Section \ref{S:trace} we construct a trace on the ``ideal'' generated by
typical modules.  With this in mind let us recall some properties of these
modules.  The space of typical weights is dense in the space of weights
corresponding to finite-dimensional modules.  In particular, if $a_{i}\in \NN$
for $1\leq i\leq r$ and $i\neq s$ then there are only finitely many atypical
weights with $a_{i}=\wta(h_{i})$.  Furthermore, if $\wta$ is atypical then
$a_s=\wta(h_{s})\in\ZZ$.  Thus, the name typical is fitting.

For any object $V$ of $\gm$ whose $\ZZ_2$ grading is given by $V=V_{\p0}
\oplus V_{\p1}$ let $\sdim(V)= \dim(V_{\p0}) -\dim(V_{\p1})$ be the
superdimension of $V$.  From Proposition 2.10 of \cite{Kac78} we have that if
$V$ is a typical $\g$-module then $\sdim(V)=0$.  This vanishing can make other
mathematical objects trivial.  For example, the supertrace on endomorphisms of
a typical module and quantum invariants of links arising from Lie
superalgebras (see Proposition \ref{P:tr0} and \cite{GP2}, resp.).

Fix a typical module $V_0$.  Let $\ideal_{V_0}$ be the set of objects $V$ of
$\gm$ such that there exists an object $W$ of $\gm$ and even $\g$-linear
morphisms $\alpha:V\rightarrow V_0\otimes W$ and $\beta:V_0\otimes W
\rightarrow V$ with $\beta\circ \alpha=\Id_V$.

\begin{prop}\text{ \;}\label{P:ideal}
  \begin{enumerate}
  \item The definition of $\ideal_{V_0}$ does not depend on the choice of
    $V_0$, i.e. $\ideal_{V_0}=\ideal_{V_1}$ for any two typical modules $V_0$
    and $V_1$.
  \item The set $\ideal_{V_0}$ is an ideal in the sense that for any $V,V'\in
    \ideal_{V_0}$ and $W\in \gm$ we have $V\otimes W\in \ideal_{V_0}$ and
    $V\oplus V'\in \ideal_{V_0}$.
  \end{enumerate}
\end{prop}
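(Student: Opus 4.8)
The plan is to prove both parts by exhibiting explicit splittings, using only the definition of $\ideal_{V_0}$ together with two standard facts about typical modules: first, that the tensor product of a typical module with any finite-dimensional module is again a direct sum of typical modules (this follows from the defining property that typical modules split off in any finite-dimensional module, combined with complete reducibility of such products), and second, that any two typical modules $V_0$, $V_1$ appear as direct summands of a common tensor product, e.g.\ $V_0 \otimes (V_0^* \otimes V_1)$ contains $V_1$ as a summand since $V_0^*\otimes V_0$ contains the trivial module. Throughout, ``$\g$-linear morphism'' means an even morphism, and I will only ever compose and tensor such morphisms, so all maps constructed remain even.

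For part (1), I would argue as follows. Suppose $V \in \ideal_{V_0}$, so we have even $\g$-linear $\alpha : V \to V_0 \otimes W$ and $\beta : V_0 \otimes W \to V$ with $\beta\circ\alpha = \Id_V$. I want to show $V \in \ideal_{V_1}$. By the second fact above, $V_1 \otimes V_1^* \otimes V_0$ contains $V_0$ as a direct summand; let $p : V_1 \otimes (V_1^* \otimes V_0) \to V_0$ and $q : V_0 \to V_1 \otimes (V_1^*\otimes V_0)$ be the corresponding projection and inclusion, so $p\circ q = \Id_{V_0}$. Setting $W' = V_1^* \otimes V_0 \otimes W$, I get even $\g$-linear maps
\begin{align*}
  V \xrightarrow{\ \alpha\ } V_0\otimes W \xrightarrow{\ q\otimes\Id_W\ } V_1\otimes W', &&
  V_1\otimes W' \xrightarrow{\ p\otimes\Id_W\ } V_0\otimes W \xrightarrow{\ \beta\ } V,
\end{align*}
whose composite is $\beta\circ(p\otimes\Id_W)\circ(q\otimes\Id_W)\circ\alpha = \beta\circ\alpha = \Id_V$. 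Hence $V\in\ideal_{V_1}$, and by symmetry $\ideal_{V_0}=\ideal_{V_1}$.

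For part (2), both closure statements are straightforward manipulations once part (1) is available. If $V \in \ideal_{V_0}$ via $(\alpha,\beta)$ and $W \in \gm$, then $V\otimes W \in \ideal_{V_0}$ using $\alpha\otimes\Id_W : V\otimes W \to V_0\otimes(W\otimes W)$ and $\beta\otimes\Id_W$ in the other direction — I reassociate the tensor factors and absorb everything but $V_0$ into the new auxiliary module. For the direct sum: given $V\in\ideal_{V_0}$ and $V'\in\ideal_{V_1}$ with $V_0$, $V_1$ typical, part (1) lets me assume $V_1 = V_0$, so both are cut out of $V_0\otimes W$ and $V_0\otimes W'$ respectively; then $V\oplus V'$ is cut out of $V_0\otimes(W\oplus W')$ by the direct sum of the respective $\alpha$'s and $\beta$'s, using the canonical maps $V_0\otimes W \hookrightarrow V_0\otimes(W\oplus W')$ and its projection. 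The main obstacle, such as it is, lies in part (1): one must be sure that a \emph{typical} module can serve as the ``seed'' regardless of which typical module one started from, which is exactly where the property that typical modules are direct summands whenever they appear — equivalently, that $V_1^*\otimes V_1$ contains the trivial module as a direct summand so that $V_0$ is a summand of $V_1\otimes(V_1^*\otimes V_0)$ — is used. Everything else is formal bookkeeping with even $\g$-linear maps.
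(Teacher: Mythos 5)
Your argument for part (1) contains a genuine gap at the crucial step. You claim that $V_1^*\otimes V_1$ (equivalently $V_1\otimes V_1^*$, or $V_0^*\otimes V_0$) contains the trivial module $\CC$ as a \emph{direct summand}. This is false precisely when $V_1$ is typical, for the central reason this paper exists: $\sdim(V_1)=0$. By Schur's lemma the spaces $\Hom_\g(\CC,V_1\otimes V_1^*)$ and $\Hom_\g(V_1\otimes V_1^*,\CC)$ are each one-dimensional, spanned by $\coev_{V_1}$ and $\ev_{V_1}$ respectively, and these compose to $\sdim(V_1)\cdot\Id_\CC=0$, so there is no splitting of $\CC$ off $V_1\otimes V_1^*$. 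Your phrase ``the property that typical modules are direct summands whenever they appear --- equivalently, that $V_1^*\otimes V_1$ contains the trivial module as a direct summand'' asserts an equivalence that does not hold: $\CC$ is atypical, so the typical-splits-off property says nothing about it. (Your ``first fact,'' semisimplicity of $V_0\otimes W$, is also circular as stated and in any case unused.)

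The conclusion you want --- that $V_0$ is a direct summand of $V_1\otimes(V_1^*\otimes V_0)$ --- is nevertheless true, and repairing the gap actually gives an argument cleaner than the paper's. The morphism $\coev_{V_1}\otimes\Id_{V_0}\colon V_0\cong\CC\otimes V_0\to V_1\otimes V_1^*\otimes V_0$ is injective, since $\coev_{V_1}$ is injective and tensoring over $\CC$ is exact, so $V_0$ is a \emph{submodule} of $V_1\otimes V_1^*\otimes V_0$. Now invoke the typicality of $V_0$ itself, the module being embedded, rather than any splitting property of $V_1\otimes V_1^*$: since $V_0$ is typical, it splits off as a direct summand. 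This gives $V_0\in\ideal_{V_1}$, hence $\ideal_{V_0}\subseteq\ideal_{V_1}$, and part (1) follows by symmetry. For comparison, the paper instead uses the character formula for typical modules to exhibit $V_\beta^{\bar c}$ as a submodule of $V_\alpha^{\bar 0}\otimes V_{\beta-\alpha}^{\bar c}$ and then applies the same typicality-implies-splitting; both routes hinge on that mechanism, but yours, once corrected, avoids the character formula. Your treatment of part (2) is fine.
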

We define $\ideal$ to be the set $\ideal_{V}$ where $V$ is any typical module,
which is well define by the proposition.  \\

\noindent
{\em Proof of Proposition
  \ref{P:ideal}.}  We will prove the first statement, the second follows
easily from the definition of $\ideal_{V_0}$.  
First, we have $W\in \ideal_V$ if and only if $\ideal_W \subset \ideal_V$.  We
will use this fact in the remainder of the proof.

As mentioned above irreducible finite dimensional $\g$-modules are in one to
one correspondence with $\NN^{r-1}\times \CC$.  We will denote $V_\alpha^{\bar
  c}$ as the module corresponding to $(\bar c, \alpha)\in \NN^{r-1}\times
\CC$.  Let $V_\alpha^{\bar 0}$ and $V_\beta^{\bar c}$ be typical modules.
From the character formula for typical modules we know that $V_\beta^{\bar c}$
is a submodule of $V_\alpha^{\bar 0}\otimes V_{\beta -\alpha}^{\bar c}$.
Since typical modules always split we have $V_\beta^{\bar c}\in
\ideal_{V_\alpha^{\bar 0}}$ and so $\ideal_{V_\beta^{\bar
    c}}\subset\ideal_{V_\alpha^{\bar 0}}$. 

On the other, from the discussion in the previous paragraph we have
$\Hom_\g(V_\alpha^{\bar 0}\otimes V_{\beta -\alpha}^{\bar c},V_\beta^{\bar
  c})\neq 0$, implying $\Hom_\g(V_\alpha^{\bar 0},V_\beta^{\bar c}\otimes
(V_{\beta -\alpha}^{\bar c})^*)\neq 0$.  
Therefore, as $V_\alpha^{\bar 0}$ is typical, $V_\alpha^{\bar 0}\in
\ideal_{V_\beta^{\bar c}}$ and so $\ideal_{V_\alpha^{\bar 0}}\subset\ideal_{V_\beta^{\bar
    c}}$.  \hfill \qed 

\section{A trace}\label{S:trace}
In this section we define a non-zero supertrace on $\Endg(V)$ for $V\in
\ideal$.  First, let us prove that the usual supertrace on $\Endg(V)$ is zero.

Let $V$ be a super-space and let $\{v_i\}$ be a basis of $V$ with homogeneous
vectors.  Let $\{v_i^*\}$ be the dual basis of $V^*$.  We have that
$\p{v}_i^*=\p{v}_i=\p{v}_i.\p{v}_i^*$.  Define
the supertrace on $\End_\CC(V)$ to be the function $\str_V:
\End_\CC(V)\rightarrow \CC$ given by $f\mapsto \sum_i
(-1)^{\p{v}_i}v_i^*\left(f(v_i)\right)$.  Then $\str$ has the property that if
$f\in\Hom_\CC(V,W)$ and $g\in\Hom_\CC(W,V)$ then $\str_W(f\circ
g)=(-1)^{\p f.\p g}\str_V(g\circ f)$.

Let us define the partial supertrace that is a generalization of the
supertrace.  For this, we first define the the evaluation and coevaluation
morphisms $\ev_V : V \otimes V^* \rightarrow \CC$ and $\coev_V: \CC\rightarrow
V\otimes V^*$ given by $v\otimes f\mapsto (-1)^{\p f \p v}f(v)$ and $1\mapsto
\sum_i v_i\otimes v_i^*$, respectively.
\begin{defi} \label{D:pt}
  Let $U$ and $V$ be super-spaces and $f\in\End_\CC(U\otimes V)$.
  Then we call the partial supertrace of $f$ the endomorphism
  $$\pt(f)=(\Id_U\otimes \ev_V) \circ (f\otimes\Id_{V^*}) \circ
  (\Id_U\otimes\coev_V)\in\End_\CC(U).$$
\end{defi}
For $f$ as in Definition \ref{D:pt} we have $\str_{U\otimes V}(f)=
\str_U(\pt(f))$.  In addition, if $f\in\Endg(U\otimes V)$ then
$\pt(f)\in\Endg(U)$.

Let $V$ be an element of $\ideal=\ideal_{V_0}$ and $f\in \Endg(V)$.  Choose
morphisms $\alpha : V_0\otimes W \rightarrow V$ and $\beta:V\rightarrow
V_0\otimes W$ such that $\alpha\circ \beta=\Id_V$.  Then $\pt(\beta \circ f
\circ \alpha)$ is an invariant map of $V_0$ and so $\pt(\beta \circ f \circ
\alpha)=\mathfrak{c}\Id_{V_0}$ for some $\mathfrak{c}\in \CC$.  We define the
bracket of the triple $(f,\alpha,\beta)$ to be
$<f;\alpha;\beta>=\mathfrak{c}$.
\begin{prop}\label{P:tr0}
  Let $V\in\ideal$ and $f\in \Endg(V)$ then $\str_V(f)=0$.
\end{prop}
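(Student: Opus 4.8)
The plan is to transport the supertrace over $V$ onto a supertrace over a typical module $V_0$, where it vanishes for classical reasons. Since $V\in\ideal=\ideal_{V_0}$, fix even $\g$-linear morphisms $\alpha\colon V_0\otimes W\rightarrow V$ and $\beta\colon V\rightarrow V_0\otimes W$ with $\alpha\circ\beta=\Id_V$. We may assume $f$ is even: $\str_V$ vanishes identically on odd endomorphisms, since the diagonal matrix coefficients $v_i^*(f(v_i))$ are zero on parity grounds when $f$ is odd (the $v_i$-component of $f(v_i)$ has parity $\p{v}_i+\p 1\neq\p{v}_i$). Then I would write $f=f\circ\alpha\circ\beta$ and apply the cyclicity of the supertrace stated just before Definition~\ref{D:pt}, to the composable pair $f\circ\alpha\colon V_0\otimes W\rightarrow V$ and $\beta\colon V\rightarrow V_0\otimes W$:
$$\str_V(f)=\str_V\big((f\circ\alpha)\circ\beta\big)=(-1)^{\p{(f\circ\alpha)}\,\p\beta}\,\str_{V_0\otimes W}\big(\beta\circ f\circ\alpha\big)=\str_{V_0\otimes W}\big(\beta\circ f\circ\alpha\big),$$
the parity sign being $+1$ because $\alpha$, $\beta$, and $f$ are all even.

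Next I would use the identity $\str_{U\otimes V'}(\,\cdot\,)=\str_U\!\big(\pt(\,\cdot\,)\big)$ recorded after Definition~\ref{D:pt}, with $U=V_0$ and $V'=W$, to get $\str_{V_0\otimes W}(\beta\circ f\circ\alpha)=\str_{V_0}\big(\pt(\beta\circ f\circ\alpha)\big)$. Since $\beta\circ f\circ\alpha\in\Endg(V_0\otimes W)$, its partial supertrace lies in $\Endg(V_0)$, and as $V_0$ is a simple module this invariant endomorphism is a scalar multiple of the identity; that is, $\pt(\beta\circ f\circ\alpha)=\mathfrak{c}\,\Id_{V_0}$ with $\mathfrak{c}=<f;\alpha;\beta>$. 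Hence $\str_V(f)=\mathfrak{c}\,\str_{V_0}(\Id_{V_0})=\mathfrak{c}\cdot\sdim(V_0)$. Finally, because $V_0$ is typical, Proposition~2.10 of \cite{Kac78} gives $\sdim(V_0)=0$, so $\str_V(f)=0$.

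I do not expect a genuine obstacle in this argument: it is a chain of formal identities — cyclicity of $\str$, the relation $\str_{U\otimes V'}=\str_U\circ\pt$, and Schur's lemma applied to the simple module $V_0$ — whose only non-formal ingredient is the classical fact $\sdim(V_0)=0$ for typical $V_0$. The content of Section~\ref{S:trace} is rather the observation that this same computation produces the scalar $\mathfrak{c}=<f;\alpha;\beta>$, which should serve as a non-zero substitute for the vanishing supertrace; establishing that $\mathfrak{c}$ is independent of the choices of $V_0$, $W$, $\alpha$, $\beta$ and has the expected trace-like properties is where the real effort (and the quantum input) will enter later.
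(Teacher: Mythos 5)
Your proof is essentially the paper's own: both factor $\Id_V=\alpha\circ\beta$, transport $\str_V(f)$ via the cyclicity of $\str$ and the identity $\str_{U\otimes V'}=\str_U\circ\pt$ to $\str_{V_0}(\pt(\beta\circ f\circ\alpha))=<f;\alpha;\beta>\,\sdim(V_0)$, and conclude from $\sdim(V_0)=0$ for typical $V_0$. The only cosmetic difference is that you make the reduction to even $f$ explicit, which the paper leaves implicit in the definition of the bracket $<f;\alpha;\beta>$.
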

\begin{proof}
  Using the notation above, we have
  $$\str_V(f)=\str_V(f\circ \alpha\circ \beta)=\str_{V_0\otimes W}(\beta \circ
  f\circ \alpha)=\str_{V_0}(\pt(\beta \circ f \circ \alpha)).$$ 
  But $\pt(\beta \circ f \circ
  \alpha)=<f;\alpha;\beta>\Id_{V_0}$ so
  $$\str_{V}(f)=\str_{V_0}(<f;\alpha;\beta>\Id_{V_0})
  =<f;\alpha;\beta>\sdim(V_0)=0$$ as the superdimension of $V_0$ is zero.
\end{proof}

\begin{defi}
  Let $\cd:\{\text{typical modules}\} \rightarrow \CC$ be the function defined
  by
  \begin{equation*}
    \cd(V({\wta}))=\prod_{\alpha\in\roots_{\p0}^+} \frac{<\wta
      +\rho,\alpha>}{<\rho,\alpha>}  
    \Big/ \prod_{\alpha\in\roots_{\p1}^+}<\wta +\rho,\alpha>.
  \end{equation*}
\end{defi}
Note that Equation \eqref{E:typ} implies that $\cd$ is well defined.
As an example, for $\g=\sll(n|1)$ with $n\geq2$, and for $\Lambda=(0,...,0|a)$
with $a\notin\{0,-1,\ldots,1-n\}$, we have
$\cd(V(\lambda))=\prod_{i=0}^{n-1}1/(a+i)$.

\begin{theo}\label{T:trace}
  Let $V\in\ideal$ and $f\in\Endg(V)$.  Choose a typical module $V_0$,
  $\alpha\in\Hom_\g(V_0\otimes W,V)_{\p0}$ and $\beta\in\Hom_\g(V,V_0\otimes
  W)_{\p0}$ such that $\alpha\circ \beta=\Id_V$.  Then
  $$\str'_V(f)=\cd(V_0)<f;\alpha;\beta> $$ 
  depends only on $f$, i.e. does not depend on the choice of $V_0$, $\alpha$ or
  $\beta$.  Furthermore, $\str'$ is a trace in the following sense:  for any
  $V,V'\in\ideal$ and any $\g$-module $U$,
\begin{enumerate}
\item \label{TI:main1} $\str'_V:\Endg(V) \rightarrow \CC$ is linear.
\item \label{TI:main2} $\str'_{V'}(f\circ g)=(-1)^{\p g\p f }\str'_V(g\circ
  f)$ for any $f\in\Hom_\g(V,V')$ and $g\in\Hom_\g(V',V)$,
\item \label{TI:main3} $\str'_{V\otimes U}(f\otimes g)=\str'_V(f)\,\str_{U}(g)$
  for any $f\in\Endg(V)$ and any $g\in\Endg(U)$, in particular $\str'(f\otimes
  g)=\str(g)=0$ if $U\in \ideal$.
\item \label{TI:main4bis} $\str'_{V\otimes U}(f)=\str'_V(\pt(f))$ for any
  $f\in\Endg(V\otimes U)$.
\end{enumerate}
\end{theo}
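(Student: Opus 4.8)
The plan is to deform the entire situation to the Etingof-Kazhdan quantization $\EKg$ of $\g$, prove the deformed statement there using low-dimensional topology, and then specialize $h\to0$. Such a detour seems unavoidable: by Proposition~\ref{P:tr0} the genuine supertrace carries no information for comparing the scalars $<f;\alpha;\beta>$ attached to different choices, and — as recalled in the introduction — even the genuine quantum dimension of a typical module is generically zero, so the comparison can only be made after the renormalization by a modified dimension. The relevant fact is that $\cd$ is a normalized classical limit of the modified quantum dimension $\qd$ of \cite{GP2,GPT}, and that the identities making $\qd$ into a consistent ``modified trace'' on an ideal are established there by an isotopy-of-tangles argument, which has no analogue at the level of $\g$ alone.

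\textbf{Step 1 (quantization of the data).} Let $\EKg$ be the Etingof-Kazhdan quantization of $\g$, a ribbon Hopf superalgebra over $\CC[[h]]$, and let $\EKg\modf$ be its ribbon category of topologically free finite-rank modules. The Etingof-Kazhdan construction gives a $\CC[[h]]$-linear tensor equivalence $\Phi\colon\gm[[h]]\to\EKg\modf$ which reduces to the identity tensor functor modulo $h$; in particular it sends $\ev_V$, $\coev_V$ and $\pt$ to quantum analogues that themselves reduce to the classical ones modulo $h$, sends each irreducible $V(\wta)$ to a deformation $\qum{V(\wta)}$ with $\End(\qum{V(\wta)})=\CC[[h]]\,\Id$, preserves typicality (which is a categorical property, by the characterization recalled before Proposition~\ref{P:ideal}), carries the ideal $\ideal$ to the analogous quantum ideal, and sends even splittings $\alpha\in\Hom_\g(V_0\otimes W,V)_{\p0}$, $\beta\in\Hom_\g(V,V_0\otimes W)_{\p0}$ with $\alpha\circ\beta=\Id_V$ to even morphisms $\qum\alpha,\qum\beta$ with $\qum\alpha\circ\qum\beta=\Id_{\qum V}$.

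\textbf{Step 2 (the quantum theorem).} In $\EKg\modf$ one has the modified quantum dimension $\qd$ on typical modules, and one sets $\qum{\str}'_{\qum V}(\qum f)=\qd(\qum{V_0})\,<\qum f;\qum\alpha;\qum\beta>$. The quantum analogue of Theorem~\ref{T:trace} — that $\qum{\str}'$ is well defined (independent of $\qum{V_0}$, $\qum\alpha$, $\qum\beta$) and satisfies the analogues of \eqref{TI:main1}--\eqref{TI:main4bis}, with $\str$ replaced by the quantum supertrace — is precisely the assertion that $\qd$ defines a modified trace on the quantum ideal. This is proved by low-dimensional topology as in \cite{GP2,GPT}: using the ribbon structure one interprets $<\qum f;\qum\alpha;\qum\beta>$ as the value of a coloured $(1,1)$-tangle (ribbon graph), and the independence of the splitting — the crucial ambidexterity — reduces to an isotopy together with the explicit Weyl-type formula for $\qd$ and its multiplicativity under tensor products, the remaining trace properties following from standard moves. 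This is the main obstacle of the proof, and the step with no classical counterpart, precisely because $\sdim(V_0)=0$ (and the genuine $\qdim(\qum{V_0})$ is generically $0$ as well) make the naive arguments vacuous.

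\textbf{Step 3 (classical limit).} The formula for $\qd$ from \cite{GP2} is the $q$-analogue of that for $\cd$, namely
\begin{equation*}
  \qd(\qum{V(\wta)})=\prod_{\alpha\in\roots_{\p0}^+}\frac{\qn{<\wta+\rho,\alpha>}}{\qn{<\rho,\alpha>}}\,\Big/\prod_{\alpha\in\roots_{\p1}^+}\qn{<\wta+\rho,\alpha>},
\end{equation*}
and each $\qn{x}$ vanishes to first order in $h$; hence the factors of $h$ cancel in the first product while the second contributes $h^{N}$ with $N=\#\roots_{\p1}^+$, so that, with the normalization built into the definition of $\cd$, one gets $h^{N}\qd(\qum{V(\wta)})\to\cd(V(\wta))$ as $h\to0$. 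On the other hand $<\qum f;\qum\alpha;\qum\beta>\in\CC[[h]]$ and equals $<f;\alpha;\beta>$ modulo $h$, since $\Phi$ is the identity modulo $h$ on morphisms and the partial supertrace is the $h=0$ limit of its quantum version. Therefore
\begin{equation*}
  h^{N}\,\qum{\str}'_{\qum V}(\qum f)\ \xrightarrow{\ h\to0\ }\ \cd(V_0)\,<f;\alpha;\beta>\,=\,\str'_V(f).
\end{equation*}
By Step~2 the left-hand side does not depend on $V_0$, $\alpha$ or $\beta$, hence neither does the limit: $\str'_V(f)$ is well defined. Each of \eqref{TI:main1}--\eqref{TI:main4bis} is then obtained by applying $\Phi$, writing down the corresponding identity for $\qum{\str}'$ in $\EKg\modf$, multiplying both sides by the appropriate power of $h$, and letting $h\to0$; in \eqref{TI:main3} one uses in addition that the quantum supertrace of $\qum U$ tends to $\str_U$ as $h\to0$, because the ribbon element of $\EKg$ is $1+O(h)$. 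Finally $\str'$ is non-trivial: for a typical module $V$ one has $\str'_V(\Id_V)=\cd(V)\neq0$ by \eqref{E:typ}.
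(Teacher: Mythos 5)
Your overall strategy is exactly the one the paper follows: deform to a ribbon category of quantized modules (you go to $\EKg\modf$ directly, the paper goes through $\kzt\modf$ and the equivalence $G$ to $\DJg\modf$, which is the same thing up to bookkeeping), interpret $\cd(V_0)<f;\alpha;\beta>$ as the reduction modulo $h$ of $\qd(\qum V_0)<T(f;\alpha;\beta)>$ (cf.\ Lemma~\ref{L:qdcd}), invoke the well-definedness of the renormalized invariant $F'$ of \cite{GP2,GPT} on ribbon graphs, and take $h\to0$. Your normalization of $\qd$ differs from the paper's only by the factor $h^{|\roots_{\p1}^{+}|}$, which you reinsert by hand; that is harmless.

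There is however a genuine gap in your treatment of property~\eqref{TI:main2}, which you dismiss as ``following from standard moves.'' The ribbon-graph calculus that underlies $F'$ lives in a symmetric (or braided) monoidal category in which coupons are labelled by \emph{even} morphisms only — this is exactly the category $\gMod_{\p 0}$, as the paper stresses in the Appendix and again before Lemma~\ref{L:Proof2}. But in \eqref{TI:main2} the maps $f\in\Hom_\g(V,V')$ and $g\in\Hom_\g(V',V)$ may be odd (with $f\circ g$ even), and then $C_{\qum V}^{\qum V'}(\qum f)$ is not a legal coupon and no isotopy argument applies directly. The paper fills this by the parity-shift device: replacing $V$ by $\op V$ via the odd isomorphism $\sigma_V$, proving in Lemma~\ref{L:Proof2} that closing a coupon around a strand whose parity has been flipped changes the invariant by a sign, deducing $F'(L_{C_{\qum V}^{\qum V}(\qum f)})=-F'(L_{C_{\op{\qum V}}^{\op{\qum V}}(\qum\sigma\circ\qum f\circ\qum\sigma)})$ in Lemma~\ref{L:Proof1}, and then writing $\qum\sigma\circ\qum f\circ\qum g\circ\qum\sigma$ as a composition of two \emph{even} coupons whose closures can be cyclically permuted. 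That bookkeeping is precisely where the factor $(-1)^{\p f\p g}$ in \eqref{TI:main2} comes from, and it is not a standard move of the coupon calculus. Relatedly, for the well-definedness statement itself you should first observe (as the paper does) that if $f$ is odd then $\pt(\beta\circ f\circ\alpha)$ is an odd $\g$-endomorphism of $V_0$, hence zero, so $<f;\alpha;\beta>=0$ for all choices and one may restrict to even $f$ before quantizing; your Step~1 implicitly requires this reduction but does not state it.

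A smaller point: your appeal to a tensor equivalence $\Phi\colon\gm[[h]]\to\EKg\modf$ that ``reduces to the identity tensor functor modulo $h$'' is not quite the statement available; the source of the equivalence is the Drinfeld category $\kzt\modf$, whose associativity constraint is $\Phi_{KZ}\neq 1$, so one should phrase the classical-limit argument at the level of the scalars $<T(f;\alpha;\beta)>\in\CC[[h]]$, as in the paper, rather than asserting that the tensor functor itself is the identity modulo~$h$. This does not change the conclusion, only the justification.
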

The proof of Theorem \ref{T:trace} will be given in Section \ref{S:proof}.
Let us now make a few comments about this theorem.  First, remark that
property $(\ref{TI:main4bis})$ implies property $(\ref{TI:main3})$.  Next,
property $(\ref{TI:main4bis})$ implies a kind of invariance for $\str'$.  Let
us make this statement more precise.

Let $U,U'$ be $\g$-modules and $V,V'$ be in $\ideal$.  The following spaces of
morphisms are canonically isomorphic:
\\
$\begin{array}{l}\Hom_\g(\Hom_\CC(U',V'),\Hom_\CC(U,V)) 
  \cong\Hom_\g(U\otimes V',V\otimes U')\\
  \hspace{2ex}\cong\Hom_\g(V'\otimes U,U'\otimes V)
  \cong\Hom_\g(\Hom_\CC(V,U),\Hom_\CC(V',U'))\end{array}$
\\
Let $\Psi\in\Hom_\g(\Hom_\CC(U',V'),\Hom_\CC(U,V))$ and respectively
$h,\,h^\#,\,\Psi^\#$ be the corresponding morphisms in the three other
spaces. We have $h^\#=\tau\circ h \circ\tau$ where $\tau$ is the super
permutation (see Appendix).  Also, if $f\in\Hom_\CC(U',V')$ and
$g\in\Hom_\CC(V,U)$ then $\Psi(f)=\pt(h\circ (\Id_{U}\otimes f))$ and
$\Psi^\#(g)=\pt(h^\#\circ(\Id_{V'}\otimes g))$ (here we use a generalization
of the partial trace $\pt:\Hom(A\otimes C,B\otimes C)\rightarrow\Hom(A,B)$).
Thus, applying property $(\ref{TI:main4bis})$, we get that
  \begin{equation}
    \label{E:strinv}
    \str'\left(\Psi(f)\circ g\right) = (-1)^{\p \Psi.\p f}
    \str'\left(f\circ\Psi^\#( g)\right)
  \end{equation}
  Indeed, $$\begin{array}[t]{rl} \str'\left(\Psi(f)\circ g\right) &=
    \str'\left(pt(h\circ (\Id_{U}\otimes f))\circ g\right) \\&= (-1)^{\p g.\p
      f}\str'\left(pt(h\circ (g\otimes f))\right)\\&= (-1)^{\p g.\p
      f}\str'\left(h\circ (g\otimes f)\right)\\&= \str'\left(h^\#\circ
      (f\otimes g)\right)\\&= \str'\left(pt(h^\#\circ (f\otimes g))\right)\\&=
    (-1)^{\p g.\p f}\str'\left(pt(h^\#\circ (\Id_{V'}\otimes g))\circ
      f\right)\\&= (-1)^{\p \Psi.\p f}\str'\left(f\circ\Psi^\#( g)\right).
  \end{array}$$

The results of this section can be stated in the language of symmetric
monoidal category with duality or more generally ribbon categories.  We will
not make this formalism precise, however we will end this section by giving
the following graphs which we hope will shed light on the above results.  For
more details on ribbon categories see \cite{Tu}.

Here we will represent morphisms with ribbon graphs, which are read from
bottom to top.  The tensor product of two morphisms is represented by setting
the two corresponding graphs next to each other.  For example, if $f:V
\rightarrow V'$ and $g:U \rightarrow U'$ are even morphism of $\gm$ then we
represent $f$ and $f\otimes g$ by:
\begin{equation}\label{E:rep}
  \xymatrix{ 
    \ar[d]^{V'}\\
    *+[F]\txt{  $f$ } \ar[d]^{V}\\
    \: }   \put(10,-37){\text{and}}\hspace{45pt} \xymatrix{ 
    \ar[d]^{V'}\\
    *+[F]\txt{  $f$ } \ar[d]^{V}\\
    \: }\xymatrix{ 
    \ar[d]^{U'}\\
    *+[F]\txt{\;$\put(2,-6) \textit{g} $\; } \ar[d]^{U}\\
    \: }\put(0,-37){=}\hspace{2ex}\xymatrix{ 
    \ar@< 8pt>[d]^{V'}
    \ar@< -8pt>[d]_{U'}\\
    *+[F]\txt{  $f \otimes g$ } \ar@< 8pt>[d]^{V}
    \ar@< -8pt>[d]_{U}\\
    \: }
\end{equation}
respectively.  Let the graphs $\vcenter{\xymatrix@R=10pt @C=4pt{\ar@{-}
    `d^r[]^<<{V} `r^u[rr] |--{\SelectTips{cm}{}\object@{>}} [rr] & &\\ & &}}$
and $\vcenter{\xymatrix@R=10pt @C=4pt{& &\\& &\ar@{-} `u^l[] `l^d[ll]
    |--{\SelectTips{cm}{}\object@{>}} [ll]^>>{V} }}$ represent the evaluation
and coevaluation morphisms $\ev_V : V \otimes V^* \rightarrow \CC$ and
$\coev_V: \CC\rightarrow V\otimes V^*$ given by $v\otimes f\mapsto (-1)^{\p f
  \p v}f(v)$ and $1\mapsto \sum_i v_i\otimes v_i^*$, respectively.

Let $g:V\rightarrow V$ be an even invariant morphism of a $\g$-module
$V$ and let $G$ be a ribbon graph representing $g$ (as in Equation \eqref{E:rep}).
If $V$ is simple then the morphism $g$ is a scalar times the identity, which we denote by $<g>=<G>$.
  
The elements $\str_V(g)$ and $\str'_V(g)$ can be represented by
$$\str_V(g)= \left<
\pic{0.6ex}{
\begin{picture}(10,11)(1,0)
  \drawTr
  \put(2,4.5){$\mathsmall{g}$}
  \put(10,4){$\mathsmall{V}$}
\end{picture}}
\right>,$$
\begin{equation}\label{E:picstr}
  \str_V'(g)=\cd(V_0) \left< \hspace{1ex}
    \pic{0.7ex}{
      \begin{picture}(10,20)
        \put(2,3){\vector(0,-1)3}
        \qbezier(5, 3)(5, 0)(7, 0)
        \qbezier(7, 0)(9, 0)(9, 4)
        \put(9,4){\vector(0,1){12}}
        \multiput(0,3)(7,0){2}{\line(0,1){3}}
        \multiput(0,3)(0,3){2}{\line(1,0){7}}
        \put(3.5,6){\line(0,1){1}}
        \multiput(0,7)(7,0){2}{\line(0,1){5}}
        \multiput(0,7)(0,5){2}{\line(1,0){7}}
        \put(3.5,12){\line(0,1){1}}
        \multiput(0,13)(7,0){2}{\line(0,1){4}}
        \multiput(0,13)(0,4){2}{\line(1,0){7}}
        \put(2,17){\line(0,1){3}}
        \qbezier(5, 17)(5, 20)(7, 20)
        \qbezier(7, 20)(9, 20)(9, 16)
        \put(2.5,4){$\alpha$}\put(2.5,9){$g$}\put(2.5,14){$\beta$}
        \put(-1.4,.7){$\mathsmall{V_0}$}\put(9,16){$\mathsmall{W}$}
      \end{picture}}\hspace{2ex}
  \right> 
\end{equation} 
where we require $V\in\ideal$ in \eqref{E:picstr}.  When $V$ is simple the
supertrace can be rewritten as
$$\str_V(g)=\left<{\SelectTips{cm}{}\xymatrix{ \: \ar `d^r[]_{V} `r^u[] `u^l[]
      `l^d[]  []}} \right> 
\left<\vcenter{\xymatrix@R=10pt{\ar[d]^<{V}\\ *+[F]\txt{ $g$ } \ar[d]^>{V}\\
      \:} }\right>=\sdim(V)\left<\vcenter{\xymatrix@R=10pt{\ar[d]^<{V}\\
      *+[F]\txt{ $g$ } \ar[d]^>{V}\\ \:} }\right>$$
where $\sdim(V)=0$ if $V$ is typical.  Also, when $V$ is a
typical module the $\str'$ becomes
$$\str'_V(g)= \cd(V)\left<\vcenter{\xymatrix@R=10pt{\ar[d]^<{V}\\  *+[F]\txt{
        $g$ } \ar[d]^>{V}\\ \:} }\right>$$ 
Thus, the function $\cd$ can be thought of as a nonzero replacement of the
usual superdimension.  
Moreover, $\cd$ can be thought of as the classical analogue of the modified 
quantum dimensions defined in \cite{GPT}.

If $f:V\rightarrow V'$ is an even invariant morphism let $f^*: (V')^*
\rightarrow V^*$ be the ``super-transpose'' of $f$ defined in the Appendix.
We can represent can represent $f^*$ by
$$\xymatrix{ 
  \ar[d]^{V'}\\
  *+[F]\txt{  $f^*$ } \ar[d]^{V}\\
  \: }  \put(0,-37){=}\hspace{2ex}\xymatrix{  \: \\
  *+[F]{ \rotatebox{180}{{$f$}} }\ar[u]_{V^*}\\ \ar[u]^{V^{\prime *} } }
   $$
We will use the ``super-transpose'' in the next section.  
\section{Invariant tensors}
In this section we define a non-trivial bilinear form on a space of invariant
tensors of $\g$.  The standard bilinear form on $\g$ is zero on this space of
tensors.

Let $V$ be an object of $\gm$ and let $T(V)=\oplus_i T(V)_i$ be the tensor
algebra of $V$, where $T(V)_i$ is the space $V^{\otimes i}$.  Let $T(V)^\g$ be
the invariant tensors of $T(V)$.
\begin{lem}
  All invariant tensors of $T(\g)$ are even.
\end{lem}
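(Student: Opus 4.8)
The plan is to exploit the fact that $\g$ is a Lie superalgebra of type~I, so it admits a $\ZZ$-grading $\g=\g_{-1}\oplus\g_0\oplus\g_{1}$ compatible with the $\ZZ_2$-grading, in which $\g_{\p0}=\g_0$ and $\g_{\p1}=\g_{-1}\oplus\g_{1}$. This $\ZZ$-grading is inner: there is an element $H\in\h\subset\g_0$ (a suitable coweight) whose adjoint action on $\g$ is diagonalizable with eigenvalue $j$ on $\g_j$ for $j\in\{-1,0,1\}$. In particular $\p x \equiv \langle H\text{-degree of }x\rangle \pmod 2$ for homogeneous $x\in\g$, so on $\g$ the parity is detected by the (mod 2 reduction of the) $H$-eigenvalue. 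First I would fix such an $H$ and record this last congruence; this is the structural input that makes type~I special and is precisely what fails for, say, $\mathfrak{osp}(1|2n)$.

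Next I would propagate this to the tensor algebra. The element $H$ acts on $T(\g)$ by the Leibniz/diagonal extension of $\mathrm{ad}_H$, i.e. on $T(\g)_i=\g^{\otimes i}$ by $\sum_k \Id\otimes\cdots\otimes \mathrm{ad}_H\otimes\cdots\otimes\Id$; call the resulting grading the total $H$-degree. On a homogeneous pure tensor $x_1\otimes\cdots\otimes x_i$ the total $H$-degree is the sum of the individual $H$-degrees, and reducing mod 2 and using the congruence from the first paragraph gives that the total $H$-degree is congruent mod 2 to $\p{x_1}+\cdots+\p{x_i}$, which is exactly the $\ZZ_2$-degree of the tensor in $T(\g)$. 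Hence on all of $T(\g)$, parity equals total $H$-degree mod 2.

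Finally I would use invariance: if $t\in T(\g)^\g$ then in particular $H\cdot t=0$, since $H\in\g$. But $H$ acts semisimply with integer eigenvalues, so $t$ lies in the $0$-eigenspace, i.e. $t$ has total $H$-degree $0$. By the previous paragraph, an element of total $H$-degree $0$ has parity $\bar 0$, so $t$ is even. The only step requiring a little care — and the one I'd flag as the main point rather than a real obstacle — is justifying the existence of $H\in\h$ realizing the $\ZZ$-grading with the parity-detecting property; for $\slmn$ with $m\ne n$ one can take $H$ proportional to $\mathrm{diag}(n,\dots,n\mid m,\dots,m)$ (appropriately normalized so eigenvalues lie in $\{-1,0,1\}$), and for $\osp$ the analogous element of the standard $\ZZ$-grading, and in both cases one checks directly that odd root vectors carry odd $H$-degree while even root vectors and $\h$ carry even $H$-degree. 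Everything else is a one-line eigenvalue argument.
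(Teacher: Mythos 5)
Your proof is correct and is essentially the paper's argument in slightly different clothing: the paper defines the linear functional $f$ on the root lattice by $\epsilon_i\mapsto n$, $\delta_j\mapsto m$ and reduces $f/(m-n)$ mod $2$ to detect parity from the weight, which is exactly evaluation against your normalized grading element $H$. Both proofs then conclude by observing that a $\g$-invariant tensor has weight zero (equivalently, is annihilated by $H$), hence is even.
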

\begin{proof}
  We will prove the lemma for $\g=\slmn$, the prove for $\osp$ is similar.  We
  can identify $\sll(m|n)$ with the Lie superalgebra of supertrace zero
  $(m+n)\times(m+n)$ matrices.  This standard representation is obtained by
  sending $e_i$ to the elementary matrix $E_{i,i+1}$, $f_i$ to $E_{i+1,i}$,
  $h_i$ to $E_{i,i}-E_{i+1,i+1}$ if $i\neq m$ and $h_m$ to
  $E_{m,m}+E_{m+1,m+1}$.  The Cartan subalgebra $\h$ with basis $(h_i)$ is
  contained in the space of diagonal matrices $X$.  The space $X^{*}$ has a
  canonical basis $(\epsilon_1,\ldots\epsilon_{m+n})$ which is dual to the
  basis formed by the matrices $E_{i,i}$. Set $\delta_{i}=\epsilon_{i+m}$,
  then $\h$ is the kernel of the supertrace
  $\str=\sum\epsilon_i-\sum\delta_j$.  Therefore, $\h^*$ is the quotient of
  $X^{*}$ by the supertrace.

  Let $\Lambda\subset \h$ be the root lattice generated by the positive roots.
  Let $f:\Lambda \rightarrow \ZZ$ be the linear function determined by
  $\epsilon_i \mapsto n$ and $\delta_j\mapsto m$ (note that $\str \mapsto 0$).
  By definition the simple positive even roots $\epsilon_i-\epsilon_j$ and
  $\delta_i-\delta_j$ map to zero and the simple positive odd roots
  $\epsilon_i -\delta_j$ map to $-(m-n)$.  Therefore, the image of $f$ is
  $(m-n)\ZZ$ and $f$ induces a linear map $\bar f: \Lambda\rightarrow
  \ZZ/2\ZZ$ given by $\alpha \mapsto \frac{f(\alpha)}{m-n}$ modulo $2$.  The
  map $\bar f$ in turn induces a map on the weight vectors of $T(\g)$
  (which we also denote by $\bar f$) that satisfies $\bar f(x\otimes y)=\bar
  f(x) + \bar f(y)$ for $x,y\in T(\g)$.
  Note that $\bar f$ gives the parity of a weight vector of $T(\g)$.  

  Let $t$ be an element of $T(\g)_k$ with weight
  $a_1\epsilon_1+\cdots+a_m\epsilon_m+b_1\delta_1+\cdots + b_n\delta_n$.  If $t$
  is in $(T(\g)_k)^\g$ then the Cartan subalgebra acts by zero and so the
  weight of $t$ is zero, i.e. $a_i=b_j=0$ for all $i$ and $j$.  But from above
  we have that parity of $t$ is equal to $f(t)=\frac{n\sum a_i+m \sum
    b_j}{m-n}$ modulo $2$, which is zero if $t$ is in $(T(\g)_k)^\g$.  Thus,
  all the invariant tensors of $T(\g)$ are even.
\end{proof}

From Propositions 2.5.3 and 2.5.5 of \cite{K} there exists a unique (up to
constant factor) non-degenerate supersymmetric invariant even bilinear form
$(,)$ on $\g$.  Let $b:\g \rightarrow \g^*$ be the isomorphism given by the
assignment $x\mapsto (x,\cdot)$.

We extend this bilinear form to $T(\g)$ by
$$(x_1x_2...x_k,x_1'x_2'...x_l')=\delta_{kl}\prod_{i=1}^k (-1)^{\sum_{i<j}
  \p{x}_j\p{x}'_i}(x_i,x'_i)$$ where $x_i,x_j'\in \g$.  Since $(,)$ is
non-degenerate on $\g$ we have that this extension is a non-degenerate bilinear
form on $T(\g)$.  Moreover, since $(,)$ is supersymmetric on $\g$ and
$(x,x')=0$ for all $x,x'\in\g$ such that $\p x\neq \p{x}'$ we have that the
extension is supersymmetric on $T(\g)$.

For $t\in(T(\g)_N)^\g\simeq\Hom_\g(\CC,T(\g)_N)$ we have
$t^*\in\Hom_\g(T(\g^*)_N,\CC)$, where $*$ is the ``super-transpose'' 
defined in the Appendix.  Using this notation the bilinear form is given by
$(t,t')=<t^*\circ b^{\otimes N}\circ t'>$.  Here and after, if $g\in
\End_\CC(\CC)$ then we will denote $<g>$ as the scalar $g(1)$.

Recall the definition of the coevaluation morphism $\coev_V$ given in 
Section \ref{S:trace}.
\begin{defi}
  For $N\in \NN $ define $$\IT_N=\{f(\coev_V(1)): f\in \Hom_{\g}(V\otimes
  V^*,\g^{\otimes n}) \text{ for some } V\in \ideal \}$$ and
  $\IT=\oplus_N\IT_N$.
\end{defi}
Let $t\in \IT_N$ and $t'\in (T(\g)_N)^\g$.  We will now show that $(t,t')$ can
be written in terms of the supertrace.  
We regard $t,t'$ as elements of $\Hom_\g(\CC,\g^{\otimes N})$.
As $t=f(\coev_V)$ for some $f\in \Hom_\g(V\otimes V^*, \g^{\otimes N})$ where
$V\in \ideal$, we have $t^*=\coev_V^* \circ f^*$ and 
$$(t,t') =<\coev_V^* \circ f^* \circ b^{\otimes N} \circ t'>.$$
The morphism $f^* \circ b^{\otimes N} \circ t'\in\Hom_\g(\CC,V^*\otimes
V)\simeq \Hom_\g(\CC,V\otimes V^*)$ can be identified with a $\g$-linear
endomorphism of $V$ which we denote by $[f^* \circ b^{\otimes N} \circ t']$.
Thus, we have $(t,t')=\str_V([f^* \circ b^{\otimes N} \circ t'])$ which is
zero by Proposition \ref{P:tr0}.  The above discussion can be summarized in the
following lemma.
\begin{lem}\label{L:formstr}
  If $t\in \IT_N$ and $t'\in (T(\g)_N)^\g$ then $(t,t')=\str_V([f^* \circ
  b^{\otimes N} \circ t'])$ which is zero.
\end{lem}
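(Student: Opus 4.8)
The plan is to unwind the definitions of the extended bilinear form $(,)$ and of the space $\IT_N$ so that $(t,t')$ is re-expressed as the scalar attached to a $\g$-linear endomorphism of a typical-generated module $V\in\ideal$, and then to invoke Proposition~\ref{P:tr0}. First I would fix $t\in\IT_N$ and write $t=f(\coev_V(1))$ for some $V\in\ideal$ and $f\in\Hom_\g(V\otimes V^*,\g^{\otimes N})$, regarding $t$ as an element of $\Hom_\g(\CC,\g^{\otimes N})$ via $1\mapsto t$. I would also regard $t'\in(T(\g)_N)^\g$ as a morphism in $\Hom_\g(\CC,\g^{\otimes N})$. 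Next I would recall from the discussion preceding the lemma that the bilinear form reads $(t,t')=<t^*\circ b^{\otimes N}\circ t'>$, where $b:\g\to\g^*$ is the isomorphism induced by the invariant form on $\g$, and $*$ is the ``super-transpose'' of the Appendix; since $t=f\circ\coev_V$ and super-transpose is contravariant, $t^*=\coev_V^*\circ f^*$, giving $(t,t')=<\coev_V^*\circ f^*\circ b^{\otimes N}\circ t'>$.

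The key step is then to identify the composite $f^*\circ b^{\otimes N}\circ t'$, which lies in $\Hom_\g(\CC,(\g^{\otimes N})^*)\cong\Hom_\g(\CC,V^*\otimes V)$, with a $\g$-linear endomorphism of $V$. Concretely, using the canonical adjunction isomorphisms $\Hom_\g(\CC,V^*\otimes V)\cong\Hom_\g(\CC,V\otimes V^*)\cong\Endg(V)$ (built from $\ev_V$ and $\coev_V$, exactly as in Section~\ref{S:trace}), one gets a well-defined endomorphism $[f^*\circ b^{\otimes N}\circ t']\in\Endg(V)$. The point is that under these identifications the leftover factor $\coev_V^*$ in the expression $<\coev_V^*\circ f^*\circ b^{\otimes N}\circ t'>$ is precisely what turns the evaluation-against-$\coev_V$ pairing into the supertrace of the corresponding endomorphism: $<\coev_V^*\circ \varphi> = \str_V([\varphi])$ for any $\varphi\in\Hom_\g(\CC,V^*\otimes V)$. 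Applying this with $\varphi=f^*\circ b^{\otimes N}\circ t'$ yields $(t,t')=\str_V([f^*\circ b^{\otimes N}\circ t'])$.

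Finally, since $V\in\ideal$ and $[f^*\circ b^{\otimes N}\circ t']\in\Endg(V)$, Proposition~\ref{P:tr0} gives $\str_V([f^*\circ b^{\otimes N}\circ t'])=0$, which completes the proof. The main obstacle is the middle step: making the identification $\Hom_\g(\CC,V^*\otimes V)\cong\Endg(V)$ compatible with the super-transpose and with the sign conventions for $\ev_V,\coev_V$, and verifying cleanly that $<\coev_V^*\circ\varphi>=\str_V([\varphi])$ with no stray signs. This is a diagram-chase in the category of super-spaces with its graded duality; once the conventions from the Appendix and Section~\ref{S:trace} are pinned down it is routine, but it is where all the care is needed. (Pictorially, the composite $\coev_V^*\circ f^*\circ b^{\otimes N}\circ t'$ is a closed loop labelled $V$ carrying the endomorphism $[f^*\circ b^{\otimes N}\circ t']$, i.e.\ exactly the graph defining $\str_V$.)
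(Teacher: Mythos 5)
Your proposal follows essentially the same route as the paper: write $t=f(\coev_V)$, transpose to get $t^*=\coev_V^*\circ f^*$, interpret $f^*\circ b^{\otimes N}\circ t'\in\Hom_\g(\CC,V^*\otimes V)$ as an endomorphism $[f^*\circ b^{\otimes N}\circ t']\in\Endg(V)$, observe that pairing with $\coev_V^*$ produces exactly $\str_V$ of that endomorphism, and finish with Proposition~\ref{P:tr0}. The only difference is that you isolate and name the key identity $<\coev_V^*\circ\varphi>=\str_V([\varphi])$, which the paper uses implicitly in the passage just before the lemma.
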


\begin{prop}\label{P:ITkenel}
  The sets $\IT_N$ are vector spaces.  Moreover, $\IT=\oplus_N\IT_N$ is a two
  sided ideal of $T(\g)^\g$ which is in the kernel of
  the restriction of $(.,.)$ to the space of invariant tensor $T(\g)^\g$.
\end{prop}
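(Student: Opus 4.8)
The plan is to dispatch the three assertions in turn; the last one is essentially a restatement of Lemma~\ref{L:formstr}. \emph{Step 1: each $\IT_N$ is a vector space.} Closure under scalars is immediate, since $t=f(\coev_V(1))$ gives $\mu t=(\mu f)(\coev_V(1))$. For sums, suppose $t_i=f_i(\coev_{V_i}(1))$ with $f_i\in\Hom_\g(V_i\otimes V_i^*,\g^{\otimes N})$ and $V_i\in\ideal$ for $i=1,2$. Put $V=V_1\oplus V_2$, which lies in $\ideal$ by Proposition~\ref{P:ideal}(2). As a $\g$-module $V\otimes V^*$ splits as $\bigoplus_{a,b\in\{1,2\}}V_a\otimes V_b^*$, and choosing a homogeneous basis of $V$ adapted to the decomposition one checks that $\coev_V(1)$ is the sum of the images of $\coev_{V_1}(1)$ and $\coev_{V_2}(1)$ under the inclusions $V_i\otimes V_i^*\hookrightarrow V\otimes V^*$, with no contribution from the off-diagonal summands. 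Letting $f\in\Hom_\g(V\otimes V^*,\g^{\otimes N})$ be $f_i$ on $V_i\otimes V_i^*$ and $0$ on $V_1\otimes V_2^*$ and $V_2\otimes V_1^*$, we get $f(\coev_V(1))=t_1+t_2$, so $t_1+t_2\in\IT_N$.

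\emph{Step 2: $\IT$ is a two-sided ideal of $T(\g)^\g$.} First, $\IT\subseteq T(\g)^\g$ because $\coev_V(1)$ is $\g$-invariant and $f$ is $\g$-linear; combined with Step~1 this exhibits $\IT=\oplus_N\IT_N$ as a graded subspace of $T(\g)^\g$. Now recall that an invariant tensor $s\in(T(\g)_M)^\g$ is the same datum as a $\g$-morphism $\sigma\colon\CC\to\g^{\otimes M}$ with $\sigma(1)=s$. Given $t=f(\coev_V(1))\in\IT_N$ with $V\in\ideal$, the composite $g=(\Id_{\g^{\otimes N}}\otimes\sigma)\circ f$, read through the canonical isomorphism $\g^{\otimes N}\cong\g^{\otimes N}\otimes\CC$, is a $\g$-morphism $V\otimes V^*\to\g^{\otimes(N+M)}$ with $g(\coev_V(1))=t\otimes s$; hence $t\otimes s\in\IT_{N+M}$. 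Using $(\sigma\otimes\Id_{\g^{\otimes N}})\circ f$ and $\CC\otimes\g^{\otimes N}\cong\g^{\otimes N}$ instead gives $s\otimes t\in\IT_{N+M}$. Since multiplication in $T(\g)$ is concatenation and $\IT$ is a vector space, this yields $T(\g)^\g\cdot\IT\subseteq\IT$ and $\IT\cdot T(\g)^\g\subseteq\IT$.

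\emph{Step 3: $\IT$ lies in the kernel of $(.,.)|_{T(\g)^\g}$.} Because the form carries the factor $\delta_{kl}$ it respects the grading, and $T(\g)^\g=\oplus_N(T(\g)_N)^\g$, so by bilinearity it suffices to show $(t,t')=0$ for $t\in\IT_N$ and $t'\in(T(\g)_N)^\g$. This is exactly Lemma~\ref{L:formstr}: writing $t=f(\coev_V(1))$ with $V\in\ideal$, one has $(t,t')=\str_V([f^*\circ b^{\otimes N}\circ t'])$, which vanishes by Proposition~\ref{P:tr0} since $V\in\ideal$.

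There is no genuine obstacle here: everything reduces to the closure properties of $\ideal$ from Proposition~\ref{P:ideal} and to Lemma~\ref{L:formstr}. The only place demanding a little care is the bookkeeping in Steps~1 and~2 — verifying that $\coev_{V_1\oplus V_2}(1)$ corresponds to $\coev_{V_1}(1)+\coev_{V_2}(1)$, and tracking the canonical isomorphisms $\g^{\otimes N}\cong\g^{\otimes N}\otimes\CC\cong\CC\otimes\g^{\otimes N}$ together with their Koszul signs — but this is routine.
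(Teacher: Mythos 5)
Your proof is correct and follows essentially the same approach as the paper: the $V_1\oplus V_2$ construction for closure under sums, composition with an invariant-tensor morphism $\sigma\colon\CC\to\g^{\otimes M}$ for the ideal property, and Lemma~\ref{L:formstr} (hence Proposition~\ref{P:tr0}) for the kernel statement. The only notable difference is that you explicitly verify both one-sided inclusions $T(\g)^\g\cdot\IT\subseteq\IT$ and $\IT\cdot T(\g)^\g\subseteq\IT$, whereas the paper writes out only $t'\otimes t_1\in\IT_{M+N}$ and leaves the symmetric case implicit; this is a harmless completion rather than a different route.
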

\begin{proof}
  We will first show that $\IT_N$ is a vector space.  Let $t_1,t_2\in \IT_N$
  and $\lambda\in\CC$. Then $t_i=f_i(\coev_{V_i}(1))$ for some $f_i$ and
  $V_i$.  Set $V=V_1\oplus V_2$.  Let $f:V\otimes V^*\rightarrow \g^{\otimes
    N}$ be the invariant map given by
$$f((v_1\oplus v_2)\otimes (\varphi_1\oplus \varphi_2))=f_1(v_1\otimes
\varphi_1)+\lambda f_2(v_2\otimes \varphi_2).$$  Then
$f(\coev_V(1))=t_1+\lambda t_2$.  Thus, $\IT_N$ is a vector space.   

Now we will show that $\IT$ is an ideal.  Let $t'\in (\g^{\otimes M})^{\g}$
and let $t_1$ be as above.  Let $g:V_1\otimes V_1^*\rightarrow
\g^{\otimes(M+N)}$ be the invariant map given by
$$g(v_1 \otimes \varphi_1)=t'\otimes f_1(v_1 \otimes \varphi_1).  $$
Then $g(\coev_{V_1}(1))=t'\otimes t_1$ and so $t'\otimes t_1\in \IT_{M+N}$.

The last statement of the proposition follows from Lemma \ref{L:formstr}.
\end{proof}
Next we define a bilinear form on $\IT$.  The following definition is
motivated by Lemma \ref{L:formstr} and justified by Theorem \ref{T:newbil}.
\begin{defi}
  For $t_1\in\IT_N$ and $t_2\in\IT_M$ with $t_i=f_i(\coev_{V_i})$, define
$$(t_1,t_2)'=\delta_{M,N}\str_{V_1}'\left([f_1^* \circ b^{\otimes N} \circ 
  t_2]\right)$$ 
\end{defi}
We can represent $[f_1^* \circ b^{\otimes N} \circ t_2]$ by the following
picture where $M=N=3$ for simplicity:
$$\pic{3ex}{
  \begin{picture}(8,5)(0,-1)
    \multiput(0,0)(3,0){2}{\line(0,1){2}}
    \multiput(0,0)(0,2){2}{\line(1,0){3}}
    \multiput(4,0)(3,0){2}{\line(0,1){2}}
    \multiput(4,0)(0,2){2}{\line(1,0){3}}
    \qbezier(1, 0)(1, -0.5)(1.5, -0.5)
    \qbezier(1.5, -0.5)(2.0, -0.5)(2.0, 0.0)
    \put(1.99,-0.04){\vector(1,4){0.01}}
    \qbezier(0.5, 2)(0.5, 3)(2.5, 3)
    \qbezier(2.5, 3)(4.5, 3)(4.5, 2)
    \qbezier(1.5, 2)(1.5, 3)(3.5, 3)
    \qbezier(3.5, 3)(5.5, 3)(5.5, 2)
    \qbezier(2.5, 2)(2.5, 3)(4.5, 3)
    \qbezier(4.5, 3)(6.5, 3)(6.5, 2)
    \put(5,0){\vector(0,-1)1}
    \put(6.01,-0.04){\vector(-1,4){0.01}}
    \qbezier(6, 0)(6, -0.5)(7, -0.5)
    \qbezier(7, -0.5)(8, -0.5)(8, 2.0)
    \qbezier(8, 2.0)(8, 3.0)(7, 3.0)
    \qbezier(7, 3.0)(6, 3.0)(6, 4.0)
    \put(1.2,0.9){$f_2$}\put(5.2,0.9){$f_1$}
    \put(2.2,-0.9){$\mathsmall{V_2}$}\put(5.2,-0.9){$\mathsmall{V_1}$}
  \end{picture}
}
$$
It is tempting to think that the above construction could work for $t_1\in\IT$
and any $t_2\in T(\g)$ but this is false because there are examples of $t_2\in
T(\g)$ for which the above scalar depends not only of $t_1$ but also of $f_1$.

To simplify notation we will identify $\g$ and $\g^*$ using the isomorphism
$b$ but will no longer write $b$.
\begin{theo}\label{T:newbil}
  $(\cdot,\cdot)'$ is a well define symmetric bilinear form on $\IT$ satisfying
  $(G(t_1),t_2)'=(t_1,G^*(t_2))'$ for any $t_1\in\IT_M,t_2\in \IT_N$ and
  $G\in\Hom_\g(T(\g)_M,T(\g)_N)$.\\
  In particular, the symmetric group $S_N$ acts orthogonally on $\IT_N$. 
\end{theo}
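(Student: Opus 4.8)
The plan is to reduce the whole theorem to a single ``slide the modified trace'' identity, from which well-definedness, symmetry and bilinearity follow formally, and then to read off the adjunction (and the $S_N$-statement as its special case $G=\sigma$). Since the form is $\delta_{M,N}$-supported, fix $N$ and work with $t_1,t_2\in\IT_N$, written $t_i=f_i(\coev_{V_i})$ with $f_i\in\Hom_\g(V_i\otimes V_i^*,\g^{\otimes N})$ and $V_i\in\ideal$; recall that the $\IT_N$ are vector spaces (Proposition~\ref{P:ITkenel}) and that $V_1\otimes V_2$ and $V_2\otimes V_1$ lie in $\ideal$ (Proposition~\ref{P:ideal}).

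The key step I would establish is
$$\str'_{V_1}\bigl([f_1^*\circ b^{\otimes N}\circ f_2(\coev_{V_2})]\bigr)=\str'_{V_2}\bigl([f_2^*\circ b^{\otimes N}\circ f_1(\coev_{V_1})]\bigr).$$
To prove it I would build a single morphism $h\in\Endg(V_1\otimes V_2)$ out of the two coupons $f_1,f_2$: partially transpose $f_2$ to a morphism $\hat f_2\colon V_2\to\g^{\otimes N}\otimes V_2$, partially transpose $f_1^*\circ b^{\otimes N}$ to a morphism $F_1\colon V_1\otimes\g^{\otimes N}\to V_1$, and put $h=(F_1\otimes\Id_{V_2})\circ(\Id_{V_1}\otimes\hat f_2)$ --- diagrammatically the ``theta graph'' joining $f_1$ and $f_2$ by $N$ strands coloured by $\g$, with the $V_1$- and $V_2$-strands left free. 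A snake computation then gives $\pt(h)=[f_1^*\circ b^{\otimes N}\circ f_2(\coev_{V_2})]$ (tracing out the $V_2$-factor, the partial transpose of $f_2$ recloses against $\coev_{V_2}$) and, tracing out $V_1$ instead, $\pt(\tau\circ h\circ\tau^{-1})=[f_2^*\circ b^{\otimes N}\circ f_1(\coev_{V_1})]$, where $\tau\colon V_1\otimes V_2\to V_2\otimes V_1$ is the super permutation. Property~$(\ref{TI:main4bis})$ of Theorem~\ref{T:trace} turns these into $\str'_{V_1\otimes V_2}(h)$ and $\str'_{V_2\otimes V_1}(\tau h\tau^{-1})$, and property~$(\ref{TI:main2})$ applied with $f=\tau$ and $g=h\circ\tau^{-1}$ (here $\tau$ is even, so the sign is $1$) gives $\str'_{V_2\otimes V_1}(\tau h\tau^{-1})=\str'_{V_1\otimes V_2}(h)$, proving the identity. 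This is the step I expect to be the main obstacle: each move is standard, but assembling $h$ and checking the snake identities together with the Koszul signs against the Appendix conventions requires care --- one guide is that the signs must be exactly those that already make the genuine form $(\cdot,\cdot)$ supersymmetric on the (even) space of invariant tensors.

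Granting the key identity, well-definedness and symmetry come for free. The scalar $\str'_{V_1}([f_1^*\circ b^{\otimes N}\circ t_2])$ depends only on the pair $(V_1,f_1)$ and on the element $t_2$ (never on any representation of $t_2$). So for fixed $t_1,t_2$, any two representations $(V_1,f_1),(V_1',f_1')$ of $t_1$ and any representation $(V_2,f_2)$ of $t_2$ yield $\str'_{V_1}([f_1^*b^{\otimes N}t_2])=\str'_{V_2}([f_2^*b^{\otimes N}t_1])=\str'_{V_1'}([f_1'^*b^{\otimes N}t_2])$ by the key identity, the middle term being independent of the representation of $t_1$. Hence the scalar depends only on $t_1,t_2$, so $(t_1,t_2)'$ is well defined, and the key identity now reads precisely $(t_1,t_2)'=(t_2,t_1)'$. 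Bilinearity is then clear from linearity of $t_2\mapsto[f_1^*\circ b^{\otimes N}\circ t_2]$ together with property~$(\ref{TI:main1})$, giving linearity in the second variable, and in the first by symmetry.

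For the adjunction: since $\IT$ consists of even tensors we may assume $G$ even, and then $G(t_1)=(G\circ f_1)(\coev_{V_1})\in\IT_N$ while $G^*(t_2)=(G^*\circ f_2)(\coev_{V_2})\in\IT_M$. Computing both sides with these representations and using that the super-transpose is an anti-automorphism, $(G\circ f_1)^*=f_1^*\circ G^*$, both $(G(t_1),t_2)'$ and $(t_1,G^*(t_2))'$ evaluate (suppressing $b$) to $\str'_{V_1}([f_1^*\circ G^*(t_2)])$, so they agree. The last assertion is the case $G=\sigma\in S_N\subset\Hom_\g(\g^{\otimes N},\g^{\otimes N})$: the super permutations $\sigma$ are even, preserve $\IT_N$ (since $\sigma(f(\coev_V))=(\sigma\circ f)(\coev_V)$), and satisfy $\sigma^*=\sigma^{-1}$ (a direct check on basis vectors, the Koszul signs being orthogonal); hence $(\sigma t_1,\sigma t_2)'=(t_1,\sigma^*\sigma t_2)'=(t_1,t_2)'$, i.e.\ $S_N$ acts orthogonally on $\IT_N$.
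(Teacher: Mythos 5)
Your proof is correct and follows essentially the same route as the paper: view $f_1^*\circ f_2$ as an endomorphism of $V_1\otimes V_2$, apply property~$(\ref{TI:main4bis})$ twice to pass between the one-variable and two-variable forms, and use cyclicity to get the symmetric chain $\str'_{V_1}([f_1^*b^{\otimes N}t_2])=\str'_{V_1\otimes V_2}(\cdot)=\str'_{V_2}([f_2^*b^{\otimes N}t_1])$, from which well-definedness, symmetry, bilinearity and the adjunction all follow. The only difference is one of explicitness: the paper asserts $\str'_{V_1\otimes V_2}(f_1^*\circ f_2)=\str'_{V_1\otimes V_2}(f_2^*\circ f_1)$ without comment, whereas you correctly spell out that this is the $\tau$-conjugation step supplied by property~$(\ref{TI:main2})$ (with $f=\tau$ even, so no sign).
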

\begin{proof}
  Let $t_1$ and $t_2$ be elements of $\IT_N$ with $t_i=f_i(\coev_{V_i})$.  We
  need to show that the definition of $(t_1,t_2)'$ is independent of $f_1,
  f_2, V_1,$ and $V_2$.

  Using the canonical isomorphism given in Equation \eqref{E:Hom-Tens}, we can
  identify $\Hom_\g(V_2\otimes V_2^*,V_1\otimes V_1^*)\cong
  \Hom_\g(\CC,V_1\otimes V_1^*\otimes V_2^*\otimes V_2)\cong
  \Hom_\g(\CC,V_1\otimes V_2\otimes V_1^*\otimes V_2^*)\cong \Endg(V_1\otimes
  V_2)$.
  Therefore, below we will consider $f_1^*\circ f_2$ as an element of
  $\Endg(V_1\otimes V_2)$.  Notice that for fixed $t_1=f_1(\coev_{V_1})$ the
  map $\IT_N \rightarrow \CC$ given by
  $$t\mapsto \str_{V_1}'(f_1^* \circ t)$$
  is well defined and linear.
  Then from Theorem \ref{T:trace} \eqref{TI:main4bis} we have
\begin{align*}
  \str_{V_1}'(f_1^* \circ t_2) &= \str_{V_1 \otimes V_2}'(f_1^* \circ f_2)\\
  &= \str_{V_1 \otimes V_2}'(f_2^* \circ f_1)\\
  &= \str_{V_2}'(f_2^* \circ t_1),
\end{align*}
which does not depend on $f_1$ or $V_1$.  Thus, $(\cdot,\cdot)'$ is a well
defined symmetric bilinear form.

For the last statement of the theorem we have $(G(t_1),t_2)'=\str_{V_1 \otimes
  V_2}'(f_1^*\circ G^* \circ f_2)=(t_1,G^*(t_2))'$.
\end{proof}

\section{Proof of Theorem \ref{T:trace}}\label{S:proof}
The proof of Theorem \ref{T:trace} uses quantized Lie superalgebras and
low-dimensional topology.  In particular, we have the following general plan:
(1) start with the desired statement at the level of $\gm$, (2) translate
these statements to the quantum level, (3) use properties of invariants of
ribbon graphs to prove these statements and (4) take the classical limit to
obtain the proof of the original statements.  With this in mind we will begin
this section by recalling some properties about the Drinfeld-Jimbo type
quantization of $\g$.

Let $h$ be an indeterminate and set $q=\e^{h/2}$.  We use the notation
$q^z=\e^{zh/2}$ for $z\in\CC$.  Let $\DJg$ be the Drinfeld-Jimbo type
quantization of $\g$ defined in \cite{Yam94}.  The quantization $\DJg$ is a
braided $\CC[[h]]$-Hopf superalgebra given by generators and relations.  As we
will explain now $\DJg$ is related to a quasi-Hopf superalgebra.

For each Lie algebra Drinfeld 
defined a quasi-Hopf quantized universal enveloping algebra:
$$(U(\g)[[h]],\Delta_{0},\epsilon_{0},\Phi_{KZ}).$$
The morphisms $\Delta_{0}$ and $\epsilon_{0}$ are the standard coproduct and
counit of $U(\g)[[h]]$.  The element $\Phi_{KZ}$ is the KZ-associator.  Let
$\kzt$ be the analogous topologically free quasi-Hopf superalgebra (for more
details see \cite{G04A}).

Let $\DJg\modf$ ($\kzt\modf$) be the tensor category of topologically free
$\DJg$-modules (resp. $\kzt$-modules) of finite rank, i.e.  $\DJg$-modules
(resp. $\kzt$-modules) of the form $V[[h]]$ where $V$ is a finite dimensional
$\g$-module.  We say a module $V[[h]]$ in $\DJg\modf$ is typical if $V$ is a
typical $\g$-module.

In \cite{G04A} the first author proves that there exists a functor
$G:\kzt\modf\rightarrow \DJg\modf$ which is an equivalence of tensor
categories.  There is a natural tensor functor $G':\gm\rightarrow \kzt\modf$
given by $V\mapsto V[[h]]$ and $f \mapsto G'(f)$ where the action of $\g$ on
$V$ extends to an action of $U(\g)[[h]]$ on $V[[h]]$ be linearity and
$G'(f)(\sum v_ih^i)=\sum f(v_i)h^i$.  We have the following communitive
diagram of functors
\begin{equation}\label{E:functors}
  \xymatrix{ 
    \kzt\modf \ar[rr]^{G} && \DJg\modf \ar[ld]^{\text{classical limit}}\\
    &\ar[ul]^{G'} \gm &}
\end{equation}
where the down left arrow is the classical limit given by taking the limit as
$h$ goes to zero.  For any object $V$ and morphism $g$ of $\gm$ let us denote
$G\circ G'(V)$ and $G\circ G'(g)$ by $\qum{V}$ and $\qum g$, respectively.
Here the functor $G\circ G'$ composed with the classical limit is the identity
functor, i.e. $ V \equiv\qum V$ mod $h$ and $ g \equiv \qum g $ mod $h$.

In \cite{GP2} the authors define an invariant of framed colored links.  Let us
now recall the basic construction and some properties of this invariant.
Here we say that a link or more generally a tangle is \emph{colored} if each
of its components are assigned an object of $\DJg\modf$.

Let $F$ be the usual Reshetikhin-Turaev functor from the category of framed
colored tangles to the category of $\DJg\modf$.  In \cite{GP2} a function from
the set of typical $\DJg$-module to the ring $\CC[[h]][h^{-1}]$ is defined.
As remarked in \cite{GP2} this function can be multiplied by $h^{|\roots_{\p
    1}^{+}|}$ to obtain a function which takes values in $\CC[[h]]$.  Let us
denote this function by $\qd$.

\begin{lem}\label{L:qdcd}
  We have
$$\qd(\qum{V}(\wta))=h^{|\roots_{\p 1}^{+}|}\prod_{\alpha\in\roots_{\p0}^+}
\frac {q^{<\wta+\rho,\alpha>} 
  -q^{-<\wta +\rho,\alpha>}} {q^{<\rho,\alpha>} -q^{-<\rho,\alpha>}} \Big/
\prod_{\alpha\in\roots_{\p1}^+}(q^{<\wta +\rho,\alpha>}-q^{-<\wta
  +\rho,\alpha>}). $$ In particular, $\cd(V(\wta))$ is equal to
$\qd(\qum{V}(\wta))$ mod $h$.
\end{lem}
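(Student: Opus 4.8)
The plan is to treat the two assertions of the lemma separately. The first --- the closed formula for $\qd$ --- is essentially a recollection: by the diagram \eqref{E:functors}, $\qum V(\wta)=G\circ G'(V(\wta))$ is, up to the tensor equivalences involved, the typical $\DJg$-module that deforms the classical typical module $V(\wta)$, and the function $\qd$ is, by construction, $h^{|\roots_{\p1}^+|}$ times the modified quantum dimension of that module as defined in \cite{GP2}. The explicit value of the modified quantum dimension for Lie superalgebras of type~I is computed in \cite{GP2} (see also \cite{GPT}); substituting it gives exactly the displayed product. The only real work in this step is bookkeeping: one must check that the conventions of \cite{GP2} --- the quantum parameter, which here is $q=\e^{h/2}$, the normalization of the form $<.,.>$ fixed in Subsection~\ref{SS:Liesuper}, and the parametrization of highest weights --- match those in the formula, so that the roots, the shift by $\rho$, and the powers of $q$ line up. I expect this matching of conventions, rather than any new argument, to be the main obstacle.

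For the second assertion I would simply expand each quantum integer as $h\to 0$. Since $q^z-q^{-z}=\e^{zh/2}-\e^{-zh/2}=zh+O(h^3)$ for $z\in\CC$, every factor $q^{<\wta+\rho,\alpha>}-q^{-<\wta+\rho,\alpha>}$ with $\alpha\in\roots_{\p0}^+$ is asymptotic to $<\wta+\rho,\alpha>\,h$, and $q^{<\rho,\alpha>}-q^{-<\rho,\alpha>}$ is asymptotic to $<\rho,\alpha>\,h$; hence the single power of $h$ cancels in each even ratio, which tends to $<\wta+\rho,\alpha>/<\rho,\alpha>$. In the odd product each of the $|\roots_{\p1}^+|$ factors contributes one power of $h$, so $\prod_{\alpha\in\roots_{\p1}^+}(q^{<\wta+\rho,\alpha>}-q^{-<\wta+\rho,\alpha>})=h^{|\roots_{\p1}^+|}\prod_{\alpha\in\roots_{\p1}^+}<\wta+\rho,\alpha>+O(h^{|\roots_{\p1}^+|+2})$. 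Multiplying the even part by the prefactor $h^{|\roots_{\p1}^+|}$ and dividing by the odd product, the two powers $h^{|\roots_{\p1}^+|}$ cancel, and letting $h\to 0$ leaves precisely $\prod_{\alpha\in\roots_{\p0}^+}\frac{<\wta+\rho,\alpha>}{<\rho,\alpha>}\big/\prod_{\alpha\in\roots_{\p1}^+}<\wta+\rho,\alpha>=\cd(V(\wta))$, which is the asserted congruence modulo $h$.

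It remains only to note where typicality of $\wta$ is used: the inequality \eqref{E:typ} makes each odd factor $<\wta+\rho,\alpha>$ nonzero, which is exactly what guarantees that the limiting expression --- and hence $\cd(V(\wta))$ --- is well defined; combined with the fact that $<\wta+\rho,\alpha>\neq 0$ for even positive roots (dominance of $\wta$ for $\g_{\p0}$), it also shows a posteriori that $\qd(\qum V(\wta))$ is a unit of $\CC[[h]]$ with constant term $\cd(V(\wta))$. No low-dimensional topology is needed for this lemma; the topological input sits inside the computation of the modified quantum dimension in \cite{GP2} that we are quoting.
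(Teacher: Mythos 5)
Your proposal is correct and matches the paper's approach, which is simply to quote the explicit formula for $h^{-|\roots_{\p 1}^{+}|}\qd$ from the Appendix of \cite{GP2} and then read off the congruence mod $h$ by comparing with the definition of $\cd$. You have merely spelled out the $h\to 0$ asymptotics $q^z-q^{-z}=zh+O(h^3)$ and the cancellation of the $h^{|\roots_{\p1}^+|}$ powers, which the paper leaves implicit; the remarks on convention-matching and typicality are accurate but not in the paper's one-line proof.
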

\begin{proof}
  The proof follows from the formulas for $h^{-|\roots_{\p 1}^{+}|}\qd$ given
  in the Appendix of \cite{GP2} and from the definition of $\cd$.
\end{proof}

Suppose $L$ is a framed colored link such that by cutting some component of
$L$ one obtains a framed colored $(1,1)$-tangle $T_{V(\lambda)}$ such that the
open string is colored by the deformed typical module $\qum{V}(\lambda)$ of
highest weight $\lambda$.  Then we have
$F(T_{V(\lambda)})=x.\Id_{\qum{V}(\lambda)}$, for some $x$ in $\CC[[h]]$.  Set
$<T_{V(\lambda)}>=x$.  In \cite{GP2} it is shown that the assignment
  $$L\mapsto \qd(\qum{V}(\lambda))<T_{V(\lambda)}>$$
  is a well defined colored framed link invariant denoted by $F'$.  In
  particular, $F'(L)$ is independent of $\qum{V}(\lambda)$, $T_{V(\lambda)}$
  and where $L$ is cut.

  An even morphism $f:V_1\otimes...\otimes V_n \rightarrow
  W_1\otimes...\otimes W_m$ in the category $\DJg\modf$ can be represented by
  the following box and arrows:
$$\xymatrix{ 
  \ar@< 8pt>[d]^{W_m}_{... \hspace{1pt}}
  \ar@< -8pt>[d]_{W_1}\\
  *+[F]\txt{ \: \; f \; \;} \ar@< 8pt>[d]^{V_n}_{... \hspace{1pt}}
  \ar@< -8pt>[d]_{V_1}\\
  \: }$$
Such a box is called a coupon, which we denote by
$C_{V_1,...,V_n}^{W_1,...,W_m}(f)$.  Here we will say a \emph{ribbon graph} is
a framed tangle with coupons and colors coming from the category $\DJg\modf$.
In \cite{GPT} it is shown that the construction of $F'$ can be extended to
ribbon graphs having at least one component colored by a typical
$\DJg$-module.

The invariant $F'$ can also be extended to ribbon graphs having at least one
component colored by a deformed module in $\ideal$ (see \cite{GPT}).  We will
now describe this extension in the following situation.  Let $C$ ($C'$) be a
(1,1)-tangle (resp. (2,2)-tangle) ribbon graph such that the input(s) and
output(s) are equal.  Let $L_{C}$ be the closed ribbon graph obtained from
closing the coupon $C$.  Let $T_{C'}$ be the (1,1)-tangle ribbon graph
obtained from closing right most component.  The ribbon graphs $L_C$ and
$T_{C'}$ can be represented by the following pictures
$$
L_C=\pic{0.6ex}{
        \begin{picture}(8,11)(1,0)
          \drawTr
          \put(1.6,4){$C$}
        \end{picture}} 
\qquad
T_{C'}=\pic{0.6ex}{
        \begin{picture}(8,11)(1,0)
          \qbezier(4, 3)(4, 0)(6, 0)
          \qbezier(6, 0)(9, 0)(9, 4)
          \put(9,4){\vector(0,1)3}
          \put(2,3){\vector(0,-1)3}
          \put(2,11){\vector(0,-1)3}
          \multiput(0,3)(6,0){2}{\line(0,1){5}}
          \multiput(0,3)(0,5){2}{\line(1,0){6}}
          \qbezier(4, 8)(4, 11)(6, 11)
          \qbezier(6, 11)(9, 11)(9, 7)
          \put(1.2,4){$C'$}
        \end{picture}} \,.
$$
These pictures represent respectively the trace and the partial trace of the
morphisms in the coupon.

Let $V\in\ideal$ and let $\alpha : V_0\times W \rightarrow V$ and
$\beta:V\rightarrow V_0\otimes W$ be morphisms in $\gm$ such that $\alpha\circ
\beta=\Id_V$.  Let $f\in\Endg(V)_{\p 0}$ and let $T(f;\alpha;\beta)$ be the
(1,1)-tangle ribbon graph $T_{C_{\qum V}^{\qum{V}_0\otimes \qum W}(\qum
  \beta)\circ C_{\qum V}^{\qum V}(\qum f)\circ C_{\qum{V}_0\otimes \qum
    W}^{\qum V}(\qum \alpha)}$. 
That is
$$T(f;\alpha;\beta)= {\ptrbfa{$\mathsmall{\qum\beta}$}{$\mathsmall{\qum
      f}$}{$\mathsmall{\qum\alpha}$} }$$

Then we define
 $$F'(L_{C_{\qum V}^{\qum V}(\qum f)})=\qd(\qum{V_0})<T(f;\alpha;\beta)>.$$
 In \cite{GP2,GPT} it is shown that $F'$ is well defined.
 Now we are ready to prove the main theorem of the paper.

 \vspace{5pt}

 {\em Proof of Theorem \ref{T:trace}.}  Let $V_1$ be a typical $\g$-module.
 Then we have $\ideal=\ideal_{V_0}=\ideal_{V_1}$.  Choose $\alpha_i :
 V_i\times W_i \rightarrow V$ and $\beta_i:V\rightarrow V_i\otimes W$ such
 that $\alpha_i\circ \beta_i=\Id_V$, for $i=0,1$. 
  If $f\in \Endg(V)_{\p1}$ then 
  $<f;\alpha_0;\beta_0>=<f;\alpha_1;\beta_1>=0$ as $\pt(\beta \circ f \circ
  \alpha)=<f;\alpha;\beta>\Id_{V_0}$ and $\beta \circ f \circ \alpha$ is odd.
  Therefore, we can assume that $f\in \Endg(V)_{\p0}$ (i.e. $f$ is a morphism 
  in the symmetric monoidal category $\gMod_{\p 0}$ defined in the Appendix).
 We will show that
 \begin{equation}\label{E:trwell}
   \cd(V_0)<f;\alpha_0;\beta_0>=\cd(V_1)<f;\alpha_1;\beta_1>.
 \end{equation}
 By definition of the ribbon category $\DJg\modf$ we have
 $<f;\alpha_i;\beta_i>$ is equal to $<T(f;\alpha_i;\beta_i)>$ mod $h$, for
 $i=0,1$.  Combining this with Lemma \ref{L:qdcd} we have that
 $\cd(V_i)<f;\alpha_i;\beta_i>$ is equal to
 $\qd(\qum{V}_i)<T(f;\alpha_i;\beta_i)>$ mod $h$, for $i=0,1$.  Finally, from
 \cite{GPT} we have that the extension of $F'$ to ribbon graphs is well
 define.  In particular, we have
 $\qd(\qum{V_0})<T(f;\alpha_0;\beta_0)>=\qd(\qum{V_1})<T(f;\alpha_1;\beta_1)>$.
 Thus, Equation \eqref{E:trwell} holds and $\str'_V(f)$ only depends on $f$.

 Now we prove the remaining statements of the theorem.  
 The function $\str'_V$ is linear because $F(C_{\qum V}^{\qum V}(a\qum f+b\qum
 g))=aF(C_{\qum V}^{\qum V}(\qum f))+bF(C_{\qum V}^{\qum V}(\qum g))$ for
 $f,g\in \Endg(V)_{\p0}$ and $a,b\in \CC$.  Number \eqref{TI:main3} follows
 from the property that $F'(L \sqcup L')=F'(L)F(L')$ for any two links $L$ and
 $L'$ (see \cite{GPT}).  
  The proof of Number \eqref{TI:main4bis} 
 follows from the behavior of $F'$ with respect to cabling (see \cite{GPT}).

 To prove Number \eqref{TI:main2} we need to be careful because coupons must be
 labeled by even morphisms, but the morphisms in the statement of
 \eqref{TI:main2} can be odd.  If $V$ is an object of $\gm$ then denote $\op
 V$ as the $\g$-module obtained from $V$ by taking the opposite parity.  Then
 $V$ and $\op V$ are isomorphic by an odd isomorphism $\sigma_V:V\rightarrow
 \op V$, which changes the parity.
 \begin{lem}\label{L:Proof2}
   Let $\gamma\in \End_{\DJg}(\qum{W} \otimes \qum V)_{\p0}$ and set
   $\eta=(\Id \otimes \qum{\sigma}_{V})\gamma(\Id \otimes \qum{\sigma}_{V})$.
   Then
$$F\left(T_{C_{\qum{W} \otimes \qum V}^{\qum{W} \otimes \qum
      V}(\gamma)}\right)=-F\left(T_{C_{\qum{W} \otimes \op{\qum V}}^{\qum{W}
      \otimes \op{\qum V}}(\eta)}\right).$$ 
\end{lem}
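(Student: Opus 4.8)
The plan is to recognize both sides as partial quantum traces and then transport $\gamma$ along the odd module isomorphism $\qum\sigma_V$, so that the whole discrepancy collapses to a single sign produced by the $\ZZ_2$-grading. First I would identify $F(T_{C'})$ topologically: the $(1,1)$-tangle $T_{C'}$ is obtained from the $(2,2)$-coupon $C'$ by closing its rightmost strand with an arc that runs along the far right and has no crossings with the remaining $\qum W$-colored strand. Hence, by functoriality of the Reshetikhin--Turaev functor, $F(T_{C_{\qum W\otimes\qum V}^{\qum W\otimes\qum V}(\gamma)})$ is the right partial trace of $\gamma$ in the ribbon category $\DJg\modf$: a composition of $\gamma$ with the evaluation, coevaluation and pivotal/twist morphisms attached to the $\qum V$-strand (and, because the closure arc carries no crossings, no braidings), giving an element of $\End_{\DJg}(\qum W)$; likewise for $\eta$ with the $\op{\qum V}$-strand. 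Note $\eta$ is even, being the composite of the odd $\qum\sigma_V$, the even $\gamma$, and the odd $\qum\sigma_V$, so both coupons are legitimately labelled by even morphisms. Thus the lemma reduces to showing that the partial trace of $\gamma$ over $\qum V$ equals $-1$ times the partial trace of $\eta$ over $\op{\qum V}$.

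Next I would transport along $\qum\sigma_V$. Since $\qum\sigma_V\colon\qum V\to\op{\qum V}$ is an isomorphism of $\DJg$-modules (odd, but still a morphism), it intertwines the twist, and writing $\eta$ as $\gamma$ conjugated by $\Id\otimes\qum\sigma_V^{\pm1}$ one may push the two copies of $\qum\sigma_V^{\pm1}$ through the twist and past the cap and cup that form the closure; geometrically, the two odd coupons sit on the closed loop separated only by $\gamma$, and sliding one all the way around the turn-back to cancel the other is what generates the sign. What remains is the partial trace of $\gamma$ over $\qum V$ computed with the duality morphisms of $\op{\qum V}$ pulled back along $\qum\sigma_V$ in place of those of $\qum V$. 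Because the underlying category of super-spaces of $\DJg\modf$ is the standard one recalled in Section~\ref{s:prel} and the Appendix, a direct computation with a homogeneous basis (exactly as in Definition~\ref{D:pt}) shows that $\ev_{\op{\qum V}}$ differs from $\ev_{\qum V}$ by the sign $(-1)^{\p v}$ versus $(-1)^{\p v+1}$ on a matched dual-basis pair, while $\coev$ is unchanged; since only matched pairs, with equal parities, contribute to a trace, the net effect is the single global factor $-1$. Equivalently, this is the relation $\sdim(\op{\qum V})=-\sdim(\qum V)$ propagated through the closure. Combining this with the previous paragraph yields the lemma.

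The main obstacle is the sign bookkeeping in the middle step. The substantive point is that passing the odd coupon $\qum\sigma_V$ through the morphisms that build the closure introduces only grading signs, that no braiding signs enter because the closure arc has no crossings, and that the pivotal/twist morphism, being a $\DJg$-module map, commutes with $\qum\sigma_V$ and so contributes identically on both sides. What then has to be checked is precisely that, after cancelling $\qum\sigma_V$ against $\qum\sigma_V^{-1}$, the only surviving sign is the $-1$ coming from comparing $\ev_{\op{\qum V}}$ with $\ev_{\qum V}$ on the diagonal. Everything else is routine, and the equivalence with $\sdim(\op{\qum V})=-\sdim(\qum V)$ serves as a reliable check on the sign.
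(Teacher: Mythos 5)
Your approach is essentially the paper's: reduce both sides to a partial supertrace and then compute directly in a homogeneous basis. The paper is just more explicit — it writes out the matrix coefficients of $\gamma$ and $\eta$ (its equations (\ref{E:A1}) and (\ref{E:A2})) and compares them term by term, rather than packaging the computation as a comparison of duality morphisms.

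One gap in the sign bookkeeping: you attribute the entire $-1$ to the discrepancy between $\ev_{\op{\qum V}}$ and $\ev_{\qum V}$ on matched dual-basis pairs, with ``$\coev$ unchanged.'' But the conjugating maps $\Id\otimes\qum\sigma_V$ are \emph{odd}, and once you fold them into the cap and cup of the closure, the resulting composite duality morphisms on the $\qum V$-strand are odd morphisms of $\SV$. Tensoring an odd morphism on the right of $\Id_{\qum W}$ produces a Koszul sign $(-1)^{\p w}$ depending on the parity of the $W$-vector passing through, so in fact two extra factors $(-1)^{\p{w}_i}$ and $(-1)^{\p{w}_k}$ appear along the $W$-strand (one at the cup, one at the cap). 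These cancel only because $\eta$, equivalently $\gamma$, is even, forcing $\p{w}_i=\p{w}_k$. This is exactly the factor $(-1)^{\p{w}_i+\p{w}_k}$ in the paper's equation (\ref{E:A2}), disposed of there by the remark ``$\p{w}_i=\p{w}_k$ since $\eta$ is an even morphism.'' You do observe that $\eta$ is even, but only to justify labelling the coupon; you also need it here to close the sign argument. With that added, your proof matches the paper's.
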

\begin{proof}
  Let $\{w_i\}_{i=1}^q$ and $\{v_j\}_{j=1}^p$ be bases of the $\g$-modules $V$
  and $W$, respectively.  Then $\{v_j\}_{j=1}^p$,
  $\{{\sigma}_{V}(v_j)\}_{j=1}^p$ and $\{w_i\}_{i=1}^q$ are bases for the
  $\DJg$-modules $\qum V$, $\op{\qum V}$ and $\qum W$, respectively.

  Let $\gamma_{ij}^{kl}$ be the elements of $\CC[[h]]$ defined by
$$\gamma(w_i\otimes v_j)=\sum_{k=1}^q \sum_{l=1}^p \gamma_{ij}^{kl} w_k\otimes
v_l.$$ 
A direct calculation shows:
\begin{align}
  F\left(T_{C_{\qum{W} \otimes \qum V}^{\qum{W} \otimes \qum
        V}(\gamma)}\right)(w_i) &= \sum_{k=1}^q \sum_{l=1}^p (-1)^{\p{v}_j}
  \gamma_{ij}^{kj} w_k, \label{E:A1}
  \\
  F\left(T_{C_{\qum{W} \otimes \op{\qum V}}^{\qum{W} \otimes \op{\qum
          V}}(\eta)}\right)(w_i) &= \sum_{k=1}^q \sum_{l,j=1}^p (-1)^{\p{w}_i
    + \p{w}_k}(-1)^{(\p1 + \p{v}_l)(\p1 + \p{v}_j)}
  \delta_{lj}\gamma_{ij}^{kl} w_k,\label{E:A2}
\end{align}
where $\delta_{lj}(-1)^{(\p1 + \p{v}_l)(\p1 + \p{v}_j)}=(-1)^{\p1 +\p{v}_j}$
and $\p{w}_i=\p{w}_k$ since $\eta$ is an even morphism.  Therefore, the right
sides of \eqref{E:A1} and \eqref{E:A2} are the negative of each other and the
lemma follows.
\end{proof}
\begin{lem}\label{L:Proof1}
  For $V\in \ideal$ and $f\in\Endg(V)_{\p 0}$ we have $$F'(L_{C_{\qum V}^{\qum
      V}(\qum f)})=-F'(L_{C_{\op{\qum V}}^{\op{\qum V}}(\qum \sigma \circ \qum
    f \circ \qum \sigma)}).$$
\end{lem}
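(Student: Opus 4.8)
The plan is to reduce Lemma~\ref{L:Proof1} to Lemma~\ref{L:Proof2}, which contains the one genuine computation; the rest is choosing the right decomposition data and tracking parities. First I would note that $\op V$ again lies in $\ideal$, with a splitting through the \emph{same} typical module $V_0$: since $V\in\ideal_{V_0}$, pick even $\g$-morphisms $\alpha\colon V_0\otimes W\to V$ and $\beta\colon V\to V_0\otimes W$ with $\alpha\circ\beta=\Id_V$, write $\sigma=\sigma_V\colon V\to\op V$ for the odd isomorphism, and identify $\op{(V_0\otimes W)}$ with $V_0\otimes\op W$. Then
$$\alpha'=\sigma\circ\alpha\circ(\Id_{V_0}\otimes\sigma_W)^{-1}\colon V_0\otimes\op W\to\op V,\qquad \beta'=(\Id_{V_0}\otimes\sigma_W)\circ\beta\circ\sigma^{-1}\colon\op V\to V_0\otimes\op W$$
are even $\g$-morphisms with $\alpha'\circ\beta'=\Id_{\op V}$, so $\op V\in\ideal_{V_0}$ and the extension of $F'$ from \cite{GP2,GPT} is defined on the right-hand side of the lemma. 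With this data the definition of $F'$ reads
$$F'\bigl(L_{C_{\qum V}^{\qum V}(\qum f)}\bigr)=\qd(\qum{V_0})<T(f;\alpha;\beta)>,\qquad F'\bigl(L_{C_{\op{\qum V}}^{\op{\qum V}}(\qum\sigma\circ\qum f\circ\qum\sigma)}\bigr)=\qd(\qum{V_0})<T(\sigma\circ f\circ\sigma;\alpha';\beta')>,$$
all coupons being admissible since $f$, $\sigma\circ f\circ\sigma$, $\alpha'$ and $\beta'$ are even. As the common factor $\qd(\qum{V_0})$ cancels, it remains to compare the two $(1,1)$-tangle scalars.

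Next I would compose the three stacked coupons in each $(1,1)$-tangle into one, so that $T(f;\alpha;\beta)$ becomes $T_{C_{\qum{V_0}\otimes\qum W}^{\qum{V_0}\otimes\qum W}(\gamma)}$ with $\gamma=\qum\beta\circ\qum f\circ\qum\alpha\in\End_{\DJg}(\qum{V_0}\otimes\qum W)_{\p0}$ (closing the rightmost strand, which is $\qum W$), and $T(\sigma\circ f\circ\sigma;\alpha';\beta')$ becomes $T_{C_{\qum{V_0}\otimes\op{\qum W}}^{\qum{V_0}\otimes\op{\qum W}}(\eta')}$ with $\eta'=\qum{\beta'}\circ(\qum\sigma\circ\qum f\circ\qum\sigma)\circ\qum{\alpha'}$. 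Since $G\circ G'$ is a tensor functor, plugging in the formulas for $\alpha'$, $\beta'$ and cancelling the two pairs $\qum\sigma^{-1}\circ\qum\sigma$ leaves $\eta'=(\Id_{\qum{V_0}}\otimes\qum\sigma_W)\circ\gamma\circ(\Id_{\qum{V_0}}\otimes\qum\sigma_W)^{-1}$, which is exactly the operator $\eta$ of Lemma~\ref{L:Proof2} attached to $\gamma$, with the factors $\qum W,\qum V$ of that lemma played here by $\qum{V_0},\qum W$ (so the strand being partial-traced is $\qum W$). Lemma~\ref{L:Proof2} then gives $F\bigl(T_{C(\gamma)}\bigr)=-F\bigl(T_{C(\eta')}\bigr)$, i.e. $<T(f;\alpha;\beta)>=-<T(\sigma\circ f\circ\sigma;\alpha';\beta')>$; multiplying back by $\qd(\qum{V_0})$ gives the lemma.

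The step I expect to be the main obstacle is the identification $\eta'=(\Id\otimes\qum\sigma_W)\,\gamma\,(\Id\otimes\qum\sigma_W)^{-1}$: one has to invoke the sign conventions of the Appendix for the parity-shift isomorphisms and for $\sigma_{V_0\otimes W}$ versus $\Id_{V_0}\otimes\sigma_W$ to be sure that conjugating the coupon data by $\sigma$ moves the parity shift cleanly off $V$ and onto the strand that gets closed, matching exactly the form of the operator in Lemma~\ref{L:Proof2}. Once that is in place the crucial sign $-1$ comes entirely from Lemma~\ref{L:Proof2}, and everything else — evenness of the morphisms so the coupons are legal, functoriality of $G\circ G'$ for composition and tensor product, and the independence of the $F'$-extension from the chosen typical module established in \cite{GP2,GPT} — is routine.
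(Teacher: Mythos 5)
Your argument is correct and is essentially the same as the paper's: the paper also fixes a splitting $\alpha,\beta$ through a typical $V_0$, transports it to $\op V$ via $\op\alpha=(\Id_{V_0}\otimes\sigma_W)\circ\alpha\circ\sigma_V$ and $\op\beta=\sigma_V\circ\beta\circ(\Id_{V_0}\otimes\sigma_W)$ (so the parity shift lands on the $W$-strand that gets closed), reduces to comparing the partial-trace scalars $\langle T(f;\alpha;\beta)\rangle$ and $\langle T(\op f;\op\alpha;\op\beta)\rangle$, and then invokes Lemma~\ref{L:Proof2} for the sign — the only difference being that the paper carries out the bookkeeping via a chain of ribbon-graph pictures, while you spell out the same cancellations algebraically.
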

\begin{proof}
  Let $\alpha\in\Endg(V_0\otimes W,V)_{\p0}$ and $\beta\in\Endg(V,V_0\otimes
  W)_{\p0}$ such that $\Id_V=\alpha\circ\beta$.  Then for
  $\op{\alpha}=(\Id_{V_0}\otimes\sigma_W)\circ\alpha
  \circ\sigma_V\in\Endg(V_0\otimes \op W,\op V)_{\p0}$ and
  $\op{\beta}=\sigma_V\circ\beta\circ(\Id_{V_0}\otimes\sigma_W)\in\Endg(\op
  V,V_0\otimes \op W)_{\p0}$, we have $\Id_{\op{V}}=\op\alpha\circ\op\beta$.
  Now, we also denote $\op{\qum f}= \qum \sigma_V \circ \qum f \circ \qum
  \sigma_{V}^{-1}\in \Endg(\op V)_{\p0}$ and it is convenient to give the
  following pictorial proof.
  $$\begin{array}{lcc}
    F'\left(\pic{0.7ex}{
        \begin{picture}(8,11)(1,0)
          \drawTr
          \put(2,4.5){$\mathsmall{\qum f}$}
        \end{picture}} \right)
    & =
    F'\left(\pic{0.7ex}{
        \begin{picture}(11,24)(0,-2)
          \qbezier(3.5, 3)(3.5, 0)(7.0, 0)
          \qbezier(7.0, 0)(10.0, 0)(10.0, 4)
          \put(10,4){\vector(0,1){12}}
          \multiput(0,8)(7,0){2}{\line(0,1){3}}
          \multiput(0,8)(0,3){2}{\line(1,0){7}}
          \multiput(0,12)(7,0){2}{\line(0,1){5}}
          \multiput(0,12)(0,5){2}{\line(1,0){7}}
          \multiput(0,3)(7,0){2}{\line(0,1){4}}
          \multiput(0,3)(0,4){2}{\line(1,0){7}}
          \put(3.5,11){\line(0,1){1}}
          \put(2,7){\line(0,1){1}}
          \put(5,7){\line(0,1){1}}
          \qbezier(3.5, 17)(3.5, 20)(7.0, 20)
          \qbezier(7.0, 20)(10.0, 20)(10.0, 16)
          \put(2.5,8.6){$\mathsmall{\qum\alpha}$}
          \put(2.5,13.7){$\mathsmall{\qum f}$}
          \put(2.5,3.7){$\mathsmall{\qum\beta}$}
        \end{picture}} \right)
    & =
    F'\left(\pic{0.7ex}{
        \begin{picture}(11,24)(0,-2)
          \qbezier(2, 3)(2, -2)(7, -2)
          \qbezier(7, -2)(11, -2)(11, 4)
          \qbezier(5, 3)(5, 0)(7, 0)
          \qbezier(7, 0)(9, 0)(9, 4)
          \put(9,4){\vector(0,1){12}}
          \put(11,4){\vector(0,1){12}}
          \multiput(0,3)(7,0){2}{\line(0,1){3}}
          \multiput(0,3)(0,3){2}{\line(1,0){7}}
          \multiput(0,7)(7,0){2}{\line(0,1){5}}
          \multiput(0,7)(0,5){2}{\line(1,0){7}}
          \multiput(0,13)(7,0){2}{\line(0,1){4}}
          \multiput(0,13)(0,4){2}{\line(1,0){7}}
          \put(3.5,6){\line(0,1){1}}
          \put(3.5,12){\line(0,1){1}}
          \qbezier(5, 17)(5, 20)(7, 20)
          \qbezier(7, 20)(9, 20)(9, 16)
          \qbezier(2, 17)(2, 22)(7, 22)
          \qbezier(7, 22)(11, 22)(11, 16)
          \put(2.5,3.6){$\mathsmall{\qum\alpha}$}
          \put(2.5,8.7){$\mathsmall{\qum f}$}
          \put(2.5,13.7){$\mathsmall{\qum\beta}$}
        \end{picture}} \right)
    \\
    =\qd(\qum{V}_0)\left<\ptrbfa{$\mathsmall{\qum\beta}$}{$\mathsmall{\qum f}$}{$\mathsmall{\qum\alpha}$}
\right>
    &=-\qd(\qum{V}_0)\left<\ptrbfa{$\mathsmall{\op{\qum\beta}}$}{$\mathsmall{\op{\qum f}}$}{$\mathsmall{\op{\qum\alpha}}$}\right>
    &=
    -F'\left(\pic{0.7ex}{
        \begin{picture}(8,11)(1,0)
          \drawTr
          \put(2,4.5){$\mathsmall{\op{\qum f}}$}
        \end{picture}} \right)
  \end{array}$$
  where the fourth equality comes from Lemma \ref{L:Proof2}.  
\end{proof}

Now we are ready to prove Number \eqref{TI:main2}.  Let $f:V\rightarrow V'$
and $g:V'\rightarrow V$ be morphisms of $\gm$ such that $f\circ g$ is even.
If $f$ and $g$ are both even then Number \eqref{TI:main2} follows from the
fact that the closure of $C_{\qum V}^{\qum{V}'}(\qum f)\circ
C_{\qum{V}'}^{\qum V}(\qum g)$ is isotopic to closure of $C_{\qum{V}'}^{\qum
  V}(\qum g)\circ C_{\qum V}^{\qum{V}'}(\qum f)$.  If $f$ and $g$ are both odd
then Number \eqref{TI:main2} follows from the following lemma.
\begin{lem}
  If $f$ and $g$ are both odd then $$F'(L_{C_{\qum V}^{\qum V}(\qum f \circ
    \qum g)})=-F'(L_{C_{\qum V'}^{\qum V'}(\qum g \circ \qum f )}).$$
\end{lem}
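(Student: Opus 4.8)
The identity to be proved is exactly the case $\p f=\p g=\p1$ of Number~\eqref{TI:main2}, so the sign to be produced on the right is $(-1)^{\p g\,\p f}=-1$. The strategy is to imitate the even case by converting the odd morphisms $\qum f$ and $\qum g$ into even coupon labels with the help of the odd parity-change isomorphism $\qum\sigma_{V'}:\qum{V'}\to\op{\qum{V'}}$, and then to invoke Lemma~\ref{L:Proof1}. Since $f$ is odd, $\qum\sigma_{V'}\circ\qum f:\qum V\to\op{\qum{V'}}$ is even, and since $g$ is odd, $\qum g\circ\qum\sigma_{V'}^{-1}:\op{\qum{V'}}\to\qum V$ is even; hence both may label coupons in $\DJg\modf$, and their composite is the even endomorphism $\qum g\circ\qum f$ of $\qum V$.

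First I would realize $L_{C_{\qum V}^{\qum V}(\qum g\circ\qum f)}$ as the closure, along the $\qum V$-colored strand, of the stacked coupons $C_{\op{\qum{V'}}}^{\qum V}(\qum g\circ\qum\sigma_{V'}^{-1})\circ C_{\qum V}^{\op{\qum{V'}}}(\qum\sigma_{V'}\circ\qum f)$. Viewing the \emph{same} stacked diagram as the closure along the $\op{\qum{V'}}$-colored strand instead produces $L_{C_{\op{\qum{V'}}}^{\op{\qum{V'}}}(\qum\sigma_{V'}\circ(\qum f\circ\qum g)\circ\qum\sigma_{V'}^{-1})}$, and these two closed ribbon graphs are isotopic (one is obtained from the other by sliding a coupon once around the loop). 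Since $V,V'\in\ideal$ both graphs carry an ideal-colored component, so the isotopy invariance of $F'$ on such ribbon graphs (see \cite{GPT}) gives
$$F'(L_{C_{\qum V}^{\qum V}(\qum g\circ\qum f)})=F'(L_{C_{\op{\qum{V'}}}^{\op{\qum{V'}}}(\qum\sigma_{V'}\circ(\qum f\circ\qum g)\circ\qum\sigma_{V'}^{-1})}).$$
Because $f\circ g$ is even, Lemma~\ref{L:Proof1} applied to the even endomorphism $\qum f\circ\qum g$ of $\qum{V'}\in\ideal$ identifies the right-hand side with $-F'(L_{C_{\qum{V'}}^{\qum{V'}}(\qum f\circ\qum g)})$, which is the desired identity.

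The main obstacle is the bookkeeping around the parity-change isomorphism: one must insert $\qum\sigma_{V'}$ and $\qum\sigma_{V'}^{-1}$ at the two ends of the middle coupon so that \emph{both} of the resulting labels are even; check that the passage between the two closures is an honest isotopy of a single ribbon graph, so that no braiding or ribbon-twist coefficients intrude; and verify that, after this rotation, the diagram is literally of the form to which Lemma~\ref{L:Proof1} applies. Granting these points, Lemma~\ref{L:Proof1} contributes the only sign, and it is exactly the factor $-1$ demanded by $(-1)^{\p g\,\p f}$, which completes the proof of Number~\eqref{TI:main2}.
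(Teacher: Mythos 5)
Your proof is correct and uses the same ingredients as the paper's argument — inserting the parity-change isomorphism $\sigma_{V'}$ to turn $\qum f$ and $\qum g$ into even coupon labels, sliding a coupon around the closure (an honest isotopy of one closed ribbon graph, so $F'$ is unchanged), and invoking Lemma~\ref{L:Proof1} for the single sign of $-1$. The only difference is the order: the paper applies Lemma~\ref{L:Proof1} first and then factors, rotates, and recomposes, whereas you factor and rotate first and apply Lemma~\ref{L:Proof1} at the end; the logical content and the source of the sign are identical.
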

\begin{proof}
  From Lemma \ref{L:Proof1} we have
  \begin{eqnarray}\label{E:proof1}
    F'(L_{C_{\qum V}^{\qum V}(f\circ g)})&= -F'(L_{C_{\op{\qum V}}^{\op{\qum
          V}}(\qum \sigma \circ \qum f \circ \qum g \circ \qum \sigma)}). 
  \end{eqnarray}
  Now since $\qum \sigma \circ \qum f$ and $\qum g \circ \qum \sigma$ are even
  we have the right side of Equation~\eqref{E:proof1} is equal to
  \begin{eqnarray*}
    -F'(L_{C_{{\qum V}}^{\op{\qum V}}(\qum \sigma \circ \qum f) \circ
      C_{\op{\qum V}}^{{\qum V}}(\qum g \circ \qum \sigma)}) 
    &=-F'(L_{ C_{\op{\qum V}}^{{\qum V}}(\qum g \circ \qum \sigma) \circ
      C_{{\qum V}}^{\op{\qum V}}(\qum \sigma \circ \qum f) })\\ 
    &=-F'(L_{ C_{{\qum V}}^{{\qum V}}(\qum g \circ \qum \sigma \qum \sigma
      \circ \qum f) })\\ 
    &=-F'(L_{ C_{{\qum V}}^{{\qum V}}(\qum g  \circ \qum f) }).
  \end{eqnarray*}
  Thus we have proved the lemma.
\end{proof}

This finishes the proof of Number \eqref{TI:main2} and the theorem. 
\hfill \qed \medskip

\section*{Appendix}

The theory of super-spaces follows the rule ``whenever you permute two odd
elements in an expression, put a $-$ sign''.  With this in mind, many concepts
of linear algebra have super analogs.
These analogs have new and different properties which are relevant to this
paper.
Let us discuss some of these differences.

In all the following, elements of super-spaces are generally assumed to be
homogeneous and thus their parity is well defined.  The definitions must be
generalized by linearity for non homogeneous elements.

\subsection*{The category $\SV$ of super-spaces}
The category $\SV$ of super-spaces is a category whose objects are
super-spaces.  The morphisms in $\SV$ between two object $U$ and $V$ denoted by
$\Hom_\CC(U,V)$ is the super-space of linear maps with the parity given by:
$$\left\{ \begin{array}{rl}
    \Hom_\CC(U,V)_{\p 0}&= \Hom_\CC(U_{\p 0},V_{\p 0}) \oplus \Hom_\CC(U_{\p
      1},V_{\p 1})\\
    \text{ and } \Hom_\CC(U,V)_{\p 1}&= \Hom_\CC(U_{\p 0},V_{\p 1})
    \oplus \Hom_\CC(U_{\p 1},V_{\p 0}).\end{array}\right.$$
This category is ``super-monoidal'' with the super version of the operator
$\otimes$ :(let us denote $\underline \otimes$ the usual tensor product in the
category $\Vect$)
\\
For two objects $U$, $V$ of $\SV$ their tensor product is the vector space
$U\underline \otimes V$ with the $\ZZ_2$-grading given by
$$\left\{ \begin{array}{rl} 
    (U\otimes V)_{\p 0}&=U_{\p 0}\underline\otimes V_{\p
      0}\,\oplus\, U_{\p 1}\underline\otimes V_{\p 1}\\\text{ and } (U\otimes
    V)_{\p 1}&=U_{\p 0}\underline\otimes V_{\p 1}\,\oplus\, U_{\p
      1}\underline\otimes V_{\p 0}.\end{array}\right.$$
and for morphisms $f\in\Hom_{\CC}(U,U')$
    and $g\in\Hom_{\CC}(V,V')$, $f\otimes g$ is given by 
$$f\otimes g=\left|\begin{array}{r}f\underline\otimes g\text{ on }U_{\p
      0}\underline\otimes V\\ (-1)^{\p g}f\underline\otimes g\text{ on }U_{\p
      1}\underline\otimes V\end{array}\right.$$
So that $(f\otimes g)(x\otimes y)=(-1)^{\p g.\p x}f(x)\otimes g(y)$.  This
tensor product realizes an isomorphism:
\begin{equation}\label{E:ConIso}
 \Hom_{\CC}(U,U')\otimes\Hom_{\CC}(V,V')\simeq\Hom_{\CC}(U\otimes V,U'\otimes
V').
\end{equation}

Let $\SV_{\p 0}$ be the subcategory of $\SV$ with the same objects but only
even morphisms (i.e. $\Hom_{\SV_{\p 0}}(U,V)=\Hom_\CC(U,V)_{\p 0}$).  The
tensor product $\otimes$ restricted to $\SV_{\p 0}$ is the usual bifunctor of
$\Vect$ with an appropriate grading on objects.  Moreover, $\SV_{\p 0}$ is a
symmetric monoidal category with symmetry isomorphisms $\tau_{U,V}:U\otimes
V\simeq V\otimes U$ given by the super permutation $\tau_{U,V}(u\otimes
v)=(-1)^{\p u.\p v}v\otimes u$.  The category $\SV$ is not a symmetric
monoidal category because in general there are morphisms $f$ and $g$ with the
property that $(\Id\otimes g)\circ(f\otimes\Id) \neq
(f\otimes\Id)\circ(\Id\otimes g)$.

For a super-space $U$, the `super-dual'' $U^*$ is defined to be the super-space 
$\Hom_\CC(U,\CC)$.  
The tensor product gives the following canonical isomorphism
$$U^*\otimes V^*=\Hom_\CC(U,\CC)\otimes\Hom_\CC(V,\CC) \simeq
\Hom_\CC(U\otimes V,\CC\otimes \CC) = (U\otimes V)^*.$$
If $f\in\Hom_\CC(U,V)$, the ``super-transpose'' of $f$ is the
linear map $f^*\in\Hom_\CC(V^*,U^*)$ given by
 $$f^*(\phi)=(-1)^{\p f.\p\phi}\phi\circ f$$
for $\phi\in V^*$.   Then, if
$f,g$ are composable morphisms of $\SV$, we have 
$$(f\circ g)^*=(-1)^{\p f.\p g}g^*\circ f^*.$$  
By convention the dual is a left dual:
\begin{itemize}
\item (left duality) $\ev_V\in\Hom_\CC(V^*\otimes V,\CC)$ is simply the
  contraction $<\phi,x>=\phi(x)$.
\item (right duality) $\ev'_V\in\Hom_\CC(V\otimes V^*,\CC)$ is given by
  $<x,\phi>=(-1)^{\p x.\p \phi} \phi(x)$
\end{itemize}
This defines a canonical isomorphism $V\rightarrow V^{**}$ when $V$ is finite
dimensional.  Again here, when restricted to $\SV_{\p 0}$ the * became a
functor, i.e. the usual contravariant duality functor with some grading
information.

\subsection*{The category $\gMod$ of $\g$-modules}
The universal enveloping algebra $U\g$ of $\g$ is a Hopf super-algebra, 
i.e. $U\g$ is a Hopf algebra object in $\SV_{\p 0}$.  Let $\gMod$ be
the category where objects are finite dimensional super-space $V$ with a
structure of $\g$-module (i.e. $U\g$-modules). The
morphisms of $\gMod$ are the morphisms $f$ of $\SV$ that are (``super'')
$\g$-linear:
$$\forall x\in\g,\forall v\in V,\,f(x.v)=(-1)^{\p x.\p f}x.f(v).$$

The structure of Hopf super-algebra on $U\g$ gives the tensor product of two
$\g$-modules a natural structure of $\g$-modules and the tensor product of two
$\g$-linear morphisms is $\g$-linear.  
Similarly, if $V$ is an object of $\gMod$ then the super-space $V^*$ is a
$\g$-module whose action is induced from the antipodal map of $U\g$.
$\Hom_\g(U,V)$ is canonically isomorphic to the super-space of invariant
elements of $V\otimes U^*$ and so
\begin{equation}
  \label{E:Hom-Tens}
  \Hom_\g(U,V)\cong\Hom_\g(\CC,V\otimes U^*)
\end{equation}

Let $\gMod_{\p 0}$ be the category whose objects are the objects of $\gMod$
and whose morphisms are morphisms of $\SV_{\p 0}$ which are $\g$-linear.  Then
as above $\gMod_{\p 0}$ becomes a symmetric monoidal category with duality.
Note that in general $\gMod$ is not such a category.  This is the reason we
require that the morphisms $\alpha$ and $\beta$ in the definition of $\ideal$
(see Proposition~\ref{P:ideal}) are in $\gMod_{\p 0}$.  In other words, the
proof of Theorem~\ref{T:trace} requires that we work in the category
$\gMod_{\p 0}$.

\linespread{1}

\vfill

\end{document}